\newcolumntype{L}{>{\raggedright\arraybackslash}X} 
\def\@logofont{\footnotesize}
\def\@setaddresses{\par
  \nobreak \begingroup
  \footnotesize
  \def\author##1{\nobreak\addvspace\bigskipamount}%
  \def\\{\par\nobreak}%
  \interlinepenalty\@M
  \def\address##1##2{\begingroup
    \par\addvspace\bigskipamount\indent
    \@ifnotempty{##1}{(\ignorespaces##1\unskip) }%
    {\scshape\ignorespaces##2}\par\endgroup}%
  \def\curraddr##1##2{\begingroup
    \@ifnotempty{##2}{\nobreak\indent\curraddrname
      \@ifnotempty{##1}{, \ignorespaces##1\unskip}\/:\space
      ##2\par}\endgroup}%
  \def\email##1##2{\begingroup
    \@ifnotempty{##2}{\nobreak\indent\emailaddrname
      \@ifnotempty{##1}{, \ignorespaces##1\unskip}\/:\space
      \ttfamily##2\par}\endgroup}%
  \def\urladdr##1##2{\begingroup
    \def~{\char`\~}%
    \@ifnotempty{##2}{\nobreak\indent\urladdrname
      \@ifnotempty{##1}{, \ignorespaces##1\unskip}\/:\space
      \ttfamily##2\par}\endgroup}%
  \addresses
  \endgroup
}
\renewcommand*\subjclass[2][2010]{%
  \def\@subjclass{#2}%
  \@ifundefined{subjclassname@#1}{%
    \ClassWarning{\@classname}{Unknown edition (#1) of Mathematics
      Subject Classification; using '2000'.}%
  }{%
    \@xp\let\@xp\subjclassname\csname subjclassname@#1\endcsname
  }%
}
\newtheorem{theorem}{Theorem}[section]
\newtheorem*{theorem*}{Theorem}
\newtheorem{proposition}[theorem]{Proposition}
\newtheorem{corollary}[theorem]{Corollary}
\theoremstyle{definition}
\newtheorem{definition}[theorem]{Definition}
\theoremstyle{remark}
\newtheorem{remark}[theorem]{Remark}
\newtheorem{example}[theorem]{Example}
\begin{document}
\title[Decomposition theorems for unmatchable pairs]{Decomposition theorems for unmatchable pairs in groups and field extensions}

\author[M. Aliabadi, J. Losonczy]{Mohsen Aliabadi$^{1}$ \and Jozsef Losonczy$^{2,*}$}
\thanks{$^1$Department of Mathematics, Clayton State University, 
2000 Clayton State Boulevard, Lake City, Georgia 30260, USA.  \url{maliabadi@clayton.edu}.\\
$^2$Department of Mathematics, Long Island University,
720 Northern Blvd, Brookville, New York 11548, USA. \url{Jozsef.Losonczy@liu.edu}.}
\thanks{$^*$Corresponding Author.}

\thanks{\textbf{Keywords and phrases.} Chowla sets and subspaces, Field extension, Matching obstruction, Nearly periodic decomposition}
\thanks{\textbf{2020 Mathematics Subject Classification}. Primary: 05D15; Secondary: 11B75; 12F99. }

\begin{abstract}
A theory of matchings for finite subsets of an abelian group, introduced in connection with a conjecture of Wakeford on canonical forms for homogeneous polynomials, has since been extended to the setting of field extensions and to that of matroids. Earlier approaches have produced numerous criteria for matchability and unmatchability, but have offered little structural insight. In this paper, we develop parallel structure theorems which characterize unmatchable pairs in both abelian groups and field extensions. Our framework reveals analogous obstructions to matchability: nearly periodic decompositions of sets in the group setting correspond to decompositions of subspaces involving translates of a subfield in the linear setting. This perspective not only recovers previously known results, but also leads to new matching criteria and guarantees the existence of nontrivial unmatchable pairs.
\end{abstract}

\maketitle

\section{Introduction}\label{intro}
\textbf{Overview.}
Let $G$ be an abelian group with operation written multiplicatively, and let \(A,B\subseteq G\) be nonempty finite sets. A \emph{matching} from $A$ to $B$ is a bijection \(f:A\to B\) such that \(af(a)\notin A\) for all \(a\in A\).  This notion was introduced in~\cite{Losonczy 1} in connection with a conjecture of Wakeford concerning the possible sets of monomials that can be removed from a generic homogeneous polynomial through a linear change in variables~\cite{Wakeford}. 

More specifically, to any set $A$ of same-degree monomials in the variables \(x_1, \ldots ,x_n\) with \(|A| = n(n-1)\), one associates a weighted bipartite graph $\mathcal{G}_A$. It was shown in~\cite{Losonczy 1} that the monomials in $A$ are generically removable if and only if the biadjacency matrix of $\mathcal{G}_A$ has nonzero determinant. The nonzero terms in the Leibniz expansion of this determinant correspond to certain matchings \( A  \to \nolinebreak B \), where the monomials in $A$ are identified with their exponent vectors and $B$ is a set of vectors corresponding to the operators \(x_i\frac{\partial}{\partial x_j}\) for $i\neq j$. It turns out that when each monomial in $A$ involves all of the variables, one can find a matching corresponding to a determinant term that cannot be canceled. In this way, such a matching provides a combinatorial certificate for removability and hence a mechanism for studying canonical forms for homogeneous polynomials.

Further work on matchings has included some general results in abelian groups~\cite{Losonczy 2, Aliabadi 0}, an extension to arbitrary groups~\cite{Eliahou 1}, a lower bound on the number of matchings via Hamidoune's isoperimetric method~\cite{Hamidoune}, a linear formulation over field extensions~\cite{Eliahou 2}, and a matroidal analogue~\cite{Aliabadi 3}. 

Recently, in \cite{Aliabadi 5}, the present authors established characterization theorems for matchable subsets in abelian groups and matchable subspaces over field extensions using Dyson transforms.  These results removed a previous reliance on a diverse collection of number-theoretic inequalities and their linear analogues. We show here that the two main characterization theorems in \cite{Aliabadi 5} can be used to derive structural decomposition theorems for unmatchable pairs.  Exploiting these decompositions, we obtain existence criteria for nontrivial unmatchable pairs and further consequences, both for finite subsets of abelian groups and for finite-dimensional subspaces over field extensions.

\medskip

\textbf{Organization of the paper.} In Sections~\ref{prelims for groups} and \ref{prelims for field extensions}, we recall background information on matchings in abelian groups and field extensions. Section \ref{Structure section groups} establishes a structure theorem for unmatchable pairs \( (A,B) \) of finite subsets of any abelian group (Theorem \ref{Structure groups}), exhibiting a nearly periodic decomposition for such pairs, and then develops consequences, e.g., inheritance of unmatchability to suitable quotients and a Chowla-type criterion. Section \ref{Linear structure section} presents a linear analogue for field extensions: any unmatchable pair of subspaces decomposes as a direct sum built in part from translates of a subfield (Theorem \ref{linearStruc}). Several corollaries follow, including results on Chowla subspaces and finite fields. Section \ref{future} describes a few directions for further work.

\subsection{Matchings in abelian groups.} \label{prelims for groups}
We begin with some terminology and notation. For any positive integer \( n \), we write \( [n] \) for \( \{1, \ldots, n\} \).  Let \( G \) be an abelian group and let \( A, B \) be nonempty finite subsets of \( G \) such that \( |A| = |B| \). A bijective mapping \( f : A \rightarrow B \) is called a {\em matching} from \( A \) to \( B \) if \( af(a) \notin A \) for all \( a \in A \). If there exists at least one matching \( A \rightarrow B \), we say that the pair $(A,B)$ is \emph{matchable}; otherwise, we call $(A,B)$ \emph{unmatchable}.

In order for \( (A,B) \) to be matchable, it is clearly necessary that \( 1\notin B \). We say that the group $G$ has the \emph{matching property} if this condition is also sufficient. The following results were established in the abelian group setting in \cite{Losonczy 2}:
\begin{itemize}
    \item For a finite nonempty subset $A$ of $G$, the pair \( (A,A) \) is matchable if and only if \( 1\notin A \).
    \item $G$ has the matching property if and only if $G$ is torsion-free or cyclic of prime order.
\end{itemize}

Our main objective in the first part of this paper will be to provide a structural decomposition of unmatchable sets in \( G \) (Theorem \ref{Structure groups}).  Several applications of this result will then be presented.

\subsection{Matching subspaces in a field extension.} \label{prelims for field extensions}
For any subset \( S \) of a vector space \( V \), we let \( \langle S \rangle \) denote the subspace of \( V \) spanned by \( S \). If \( S = \{x_1, \ldots, x_n\} \), we also write \( \langle x_1, \ldots, x_n \rangle \) for this subspace. Given a field extension \( K \subseteq L \) and \( K \)-subspaces \( A \) and \( B \) of \( L \), we use \( AB \) to denote the \emph{Minkowski product} of \( A \) and \( B \):
\[
AB = \{ ab : a \in A,\ b \in B \}.
\]

In \cite{Eliahou 2}, Eliahou and Lecouvey introduced a notion of matchability for subspaces in a field extension. We give their definition below.

Let \( K \subsetneq L \) be a field extension, and let \( A \) and \( B \) be \( n \)-dimensional \( K \)-subspaces of \( L \) with \( n > 0 \).  
An ordered basis \( \mathcal{A} = \{a_1, \ldots, a_n\} \) of \( A \) is said to be \emph{matched} to an ordered basis \( \mathcal{B} = \{b_1, \ldots, b_n\} \) of \( B \) if, for each \( i \in [n] \),
\[
a_i^{-1}A \cap B \,\subseteq\, \langle b_1, \ldots, b_{i-1}, b_{i+1}, \ldots, b_n \rangle.
\]
Note that when the above condition holds, we have \( a_i b_i \notin \mathcal{A} \) for all \( i \), so the correspondence \( a_i \mapsto b_i \) defines a matching, in the group-theoretic sense, from \( \mathcal{A} \) to \( \mathcal{B} \) in the multiplicative group of \( L \).

We say that the subspace \( A \) is \emph{matched} to \( B \) if every ordered basis of \( A \) can be matched to some ordered basis of \( B \).  
In this case, we also say that the pair \( (A,B) \) is \emph{matchable}; otherwise, we call \( (A,B) \) \emph{unmatchable}.

A necessary condition for \( A \) to be matched to \( B \) is \( 1 \notin B \). This is discussed in detail in \cite{Eliahou 2}.

A field extension \( K \subsetneq L \) is said to have the \emph{linear matching property} if, for every pair of finite-dimensional \( K \)-subspaces \( A \) and \( B \) of \( L \) with \( \dim A = \dim B > 0 \) and \( 1 \notin B \), the subspace \( A \) is matched to \( B \).

Eliahou and Lecouvey established in \cite{Eliahou 2} the following fundamental results:
\begin{itemize}
    \item A subspace \( A \) is matched to itself if and only if \( 1 \notin A \).
    \item A field extension \( K \subsetneq L \) has the linear matching property if and only if \( L \) contains no nontrivial proper finite-dimensional extension over \( K \).
\end{itemize}

In Section~\ref{Linear structure section}, we will present a decomposition theorem for unmatchable pairs of subspaces (Theorem~\ref{linearStruc}). Several consequences of this result will then be developed.  We will also give a brief summary of the correspondence between relevant concepts in the group and linear settings (see Table~\ref{tab:decomp-dictionary-grid}).

\section{Structure theorem for unmatchable pairs in abelian groups} \label{Structure section groups}

The theorem below is one of the main results of \cite{Aliabadi 5}.  It can be used to recover, efficiently and without recourse to supplementary ideas, all previously known results about matchings in abelian groups.  In this section, it will be used to derive a structure theorem for unmatchable pairs \( (A, B) \).

\begin{theorem}  \label{Complete matchings for groups}
Let \( A \) and \( B \) be nonempty finite subsets of an abelian group \( G \), with \( |A| = |B| \) and \( 1 \notin B \).  Then \( (A,B) \) is matchable if and only if, for every pair of nonempty subsets \( S \subseteq A \) and \( R \subseteq B \cup \{ 1 \} \) with \( SR = S \), we have 
\[ |S| \leq |B \setminus R|. 
\]
\end{theorem}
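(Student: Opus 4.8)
The plan is to derive the theorem from Hall's theorem in its deficiency form, applied to the bipartite graph on $A\sqcup B$ with edge set $\Delta$. For $S\subseteq A$ write $N(S)=\{b\in B:ab\notin A\text{ for some }a\in S\}$, so that $B\setminus N(S)=\{b\in B:Sb\subseteq A\}$; a matching of $\Delta$ is the same as a perfect matching of this graph, and hence $(A,B)$ is matchable if and only if $|N(S)|\ge|S|$ for every $S\subseteq A$. Necessity of the stated condition is then immediate: if $(A,B)$ is matchable and $S\subseteq A$, $R\subseteq B\cup\{1\}$ are nonempty with $SR=S$, then for every $b\in R\cap B$ and every $a\in S$ we have $ab\in SR=S\subseteq A$, so $b\notin N(S)$; thus $N(S)\subseteq B\setminus R$, and $|S|\le|N(S)|\le|B\setminus R|$.

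For sufficiency I argue the contrapositive. Suppose $(A,B)$ is unmatchable, so by Hall's deficiency theorem there is a nonempty $S\subseteq A$ with $|N(S)|<|S|$. Among all sets realizing the maximum deficiency $\delta=\max_{S}\bigl(|S|-|N(S)|\bigr)>0$, take the largest one, $S_{0}$ (this exists since the sets attaining $\delta$ are closed under unions). If some $a\in A\setminus S_{0}$ had $N(\{a\})\subseteq N(S_{0})$, then $S_{0}\cup\{a\}$ would also attain $\delta$, contradicting maximality; hence $S_{0}=\{a\in A:N(\{a\})\subseteq N(S_{0})\}$, which, writing $T:=B\setminus N(S_{0})$, means precisely $S_{0}=\{a\in A:aT\subseteq A\}$. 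One checks that $T\neq\emptyset$, $1\notin T$, $S_{0}T\subseteq A$, and $|A\setminus S_{0}|<|T|$. It now suffices to produce a finite subgroup $H$ of $G$ and a nonempty $H$-periodic set $S^{*}\subseteq A$ with $|B\cap H|>|A|-|S^{*}|$: then $R^{*}:=(B\cap H)\cup\{1\}$ satisfies $S^{*}R^{*}=S^{*}$ while $|B\setminus R^{*}|=|A|-|B\cap H|<|S^{*}|$, contradicting the stated inequality.

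To build $H$ and $S^{*}$, set $P:=S_{0}(T\cup\{1\})\subseteq A$ and let $H:=\mathrm{Stab}(P)$, a finite subgroup containing $1$. For $s\in S_{0}$ and $h\in H$ one has $sh\in hP=P\subseteq A$ and $shT\subseteq h\bigl(s(T\cup\{1\})\bigr)\subseteq hP\subseteq A$, so $sh\in S_{0}$; thus $S_{0}H=S_{0}$ and $S^{*}:=S_{0}$ is $H$-periodic. It remains to show $|B\cap H|>|A\setminus S_{0}|$, and since $|A\setminus S_{0}|<|T|$ with $T\subseteq B$ it is enough to know $T\subseteq H$. Here Kneser's theorem enters: from $|P|\ge|S_{0}H|+|(T\cup\{1\})H|-|H|$ together with $|P|\le|A|$ and $S_{0}H=S_{0}$ one obtains $|(T\cup\{1\})H|\le|A\setminus S_{0}|+|H|<|T|+|H|$, so $T\cup\{1\}$ meets only few cosets of $H$; in particular, when $|T|\le|H|$ it meets exactly one, forcing $T\cup\{1\}\subseteq H$, and we are done.

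The main obstacle is the remaining case $|T|>|H|$, where $T$ genuinely occupies several cosets of $\mathrm{Stab}(P)$, so that this subgroup is too small to take as $H$ and must be enlarged. I would handle this either by passing to the quotient $G/H$ and iterating the argument there — the delicate point being that the images of $A$ and of $B$ need not have equal size, so the deficiency must be tracked carefully — or, as in \cite{Aliabadi 5}, by first applying Dyson $e$-transforms to the pair $(S_{0},T\cup\{1\})$ to normalize it so that $T\cup\{1\}$ fills a single coset of the relevant subgroup, after which $T\subseteq H$ is automatic. Making sure this normalization respects the constraint $S^{*}\subseteq A$ is the technical heart of the argument.
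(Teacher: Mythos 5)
First, a point of comparison: the paper does not prove this theorem at all --- it is imported verbatim from \cite{Aliabadi 5} (``The theorem below is one of the main results of \cite{Aliabadi 5}''), so there is no in-paper argument to measure yours against. Judged on its own terms, your necessity direction is complete and correct, and the first half of your sufficiency argument is sound: the Hall deficiency set-up, the choice of the largest maximizer $S_0$, the identification $S_0=\{a\in A: aT\subseteq A\}$ with $T=B\setminus N(S_0)$, the bound $|A\setminus S_0|<|T|$, the verification that $S_0$ is periodic under $H=\mathrm{Stab}\bigl(S_0(T\cup\{1\})\bigr)$, and the reduction of the whole problem to the single inequality $|B\cap H|>|A\setminus S_0|$ are all valid steps.

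The genuine gap is exactly where you flag it, and it is not a peripheral case --- it is the substantive content of the theorem. Kneser's theorem only yields $|(T\cup\{1\})H|<|T|+|H|$, which forces $T\cup\{1\}\subseteq H$ only when $|T|\leq|H|$; when $T$ occupies several cosets of the (possibly small) stabilizer, you have produced no nonempty $R\subseteq B\cup\{1\}$ with $SR=S$ and $|S|>|B\setminus R|$, so the contrapositive is not established. Neither of your proposed repairs is carried out, and each founders on precisely the difficulty you name: in $G/H$ the images of $A$ and $B$ need not be equinumerous, so the Hall deficiency framework does not transfer directly; and the Dyson $e$-transform applied to $(S_0,\,T\cup\{1\})$ replaces $S_0$ by a set of the form $S_0\cup e(T\cup\{1\})$, which need not lie in $A$, so the constraint $S\subseteq A$ in the target statement is not preserved by the normalization. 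Until one of these routes is executed in detail (the cited source does this via a careful $e$-transform argument), the sufficiency direction remains unproven.
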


For any subset \( X \) of an abelian group \( G \), we will write \( \langle X \rangle \) for the subgroup of \( G \) generated by \( X \).

The condition \( SR = S \) in Theorem~\ref{Complete matchings for groups} can be formulated in a different way, as described in the following proposition. For a proof, see \cite{Aliabadi 5}.

\begin{proposition} \label{UnionOfCosets}
Let \( G \) be an abelian group and let \( S \) and \( R \) be nonempty finite subsets of \( G \). Then \( SR = S \) if and only if \( S \) can be written as a disjoint union of cosets of \( \langle R \rangle \). 
\end{proposition}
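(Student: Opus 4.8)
The plan is to prove the equivalence $SR = S \iff S$ is a union of cosets of $\langle R\rangle$ by first reducing to the case where $1 \in R$, then showing $SR = S$ forces $S$ to be invariant under the submonoid generated by $R$, and finally invoking finiteness to upgrade that submonoid to the full subgroup $\langle R\rangle$.

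First I would record the easy direction. If $S$ is a union of cosets of $H := \langle R\rangle$, then since $R \subseteq H$ we have $sR \subseteq sH = S$ for each $s \in S$, so $SR \subseteq S$; and because $1$ or at least some element pairs up to recover each coset representative... — more carefully, writing $S = \bigcup_{i} s_i H$, for any $s \in S$ and any $r \in R$ we get $sr \in S$, giving $SR \subseteq S$, while for the reverse inclusion note $r^{-1} \in H$ so $s = (sr^{-1})r$ exhibits $s \in SR$ once we know $sr^{-1}\in S$, which holds since $sr^{-1}\in s_iH = S$. Hence $SR = S$.

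For the forward direction, suppose $SR = S$. The key observation is that $SR = S$ implies $SR^k = S$ for all $k \ge 1$ by induction, and moreover $S \cdot (R \cup R^2 \cup \cdots) = S$, i.e. $S$ is invariant under right multiplication by the submonoid $M$ generated by $R$. Since $G$ is a group and $R$ is finite, I would argue that $M$ is in fact a subgroup: for each $r \in R$, the powers $r, r^2, r^3, \ldots$ cannot all be distinct once we restrict attention to the finitely many values $sr^k$ can take as $k$ varies — more directly, $|S| < \infty$ forces $|SR^k|$ to stabilize, but the cleanest route is that $SR = S$ with $S$ finite means right-multiplication by each $r \in R$ permutes $S$, hence has finite order $d_r$ on $S$, so $r^{d_r}$ fixes $S$ pointwise-setwise and in particular $r^{-1} = r^{d_r - 1}$ acts on $S$ the same way some positive power of $r$ does. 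Thus $S$ is invariant under the group generated by $R$, namely $\langle R\rangle = H$: for any $s \in S$ and any $h \in H$, $sh \in S$. This says exactly that $S$ is a union of $H$-cosets (it is a union of the $H$-orbits of its elements, each of which is a full coset $sH$).

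The main obstacle is the step showing the submonoid generated by $R$ equals the subgroup $\langle R\rangle$ when acting on the finite set $S$ — one must be slightly careful that invariance of $S$ under $r$ (as a set) really does give invariance under $r^{-1}$, which is where finiteness of $S$ is essential and where the hypothesis that $G$ is a group (so inverses exist in $G$) combines with finiteness to close the argument. Everything else is bookkeeping. Since the paper states this is proved in \cite{Aliabadi 5}, I would keep the writeup to a few lines along these lines rather than belaboring it.
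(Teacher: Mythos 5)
Your proof is correct: the forward direction's key step---that $Sr\subseteq SR=S$ together with finiteness of $S$ and injectivity of translation makes $s\mapsto sr$ a permutation of $S$ of some finite order $d_r$, so that $sr^{-1}=sr^{d_r-1}\in S$ and hence $S$ is closed under all of $\langle R\rangle$---is exactly the standard argument, and the easy direction is fine as well. The paper itself does not reprove this proposition (it defers to the cited reference \cite{Aliabadi 5}), so there is nothing to contrast with; your writeup, modulo the stray self-correction in the first paragraph, is what one would expect that proof to be.
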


Below, we state and prove our structure theorem for unmatchable pairs. This result offers a different and complementary perspective to that of Theorem~\ref{Complete matchings for groups}.  It shows that unmatchability can be understood not just as the failure of a Hall-type condition, but as the consequence of a structural feature of the pair \((A,B)\).  More precisely, unmatchability occurs when there exists a nonempty set \(R\subseteq B\) such that $A$ is nearly a disjoint union of cosets of the subgroup $\langle R\rangle$, with a remainder set $Y$ of smaller size than  $R$.  This viewpoint yields concrete certificates of unmatchability, shows that several seemingly disparate matching results actually have a common underlying source, and serves as the main engine for new consequences in this paper (including existence criteria for nontrivial unmatchable pairs).

\begin{theorem} \label{Structure groups}
Let \( A \) and \( B \) be nonempty finite subsets of an abelian group \( G \), with \( |A| = |B| \).  Then the pair \( (A , B) \) is unmatchable if and only if there exists a nonempty subset \( R \) of \( B \) such that \( A \) and \( B \) can be expressed as disjoint unions as follows:
\[
A = S \cup Y, \qquad 
B = R \cup Z,
\]
where \( S \) is a disjoint union of cosets of \( \langle R \rangle \), and \( Y \) satisfies \( |Y| < | R | \). 
\end{theorem}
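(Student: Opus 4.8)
The plan is to deduce Theorem~\ref{Structure groups} from the matchability criterion of Theorem~\ref{Complete matchings for groups} together with Proposition~\ref{UnionOfCosets}. The two directions are asymmetric in difficulty, so I would treat them separately.

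For the \emph{if} direction, suppose we are given a decomposition $A = S \cup Y$, $B = R \cup Z$ (disjoint unions) with $R$ nonempty, $S$ a union of cosets of $\langle R\rangle$, and $|Y| < |R|$. I first dispose of the trivial case $1 \in B$, in which $(A,B)$ is unmatchable by the elementary necessary condition; so assume $1 \notin B$. I want to violate the inequality in Theorem~\ref{Complete matchings for groups}. Take $R' = R \cup \{1\}$ and the same $S$. By Proposition~\ref{UnionOfCosets}, since $S$ is a union of cosets of $\langle R\rangle = \langle R'\rangle$ (adjoining $1$ does not change the generated subgroup), we have $S R' = S$, and $R'$ is a nonempty subset of $B \cup \{1\}$. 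Then $B \setminus R' = B \setminus (R \cup \{1\}) = B \setminus R = Z$ (using $1 \notin B$), so $|B \setminus R'| = |Z| = |B| - |R| = |A| - |R| = |S| + |Y| - |R| < |S|$ by the hypothesis $|Y| < |R|$. This contradicts the criterion, so $(A,B)$ is unmatchable.

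For the \emph{only if} direction, suppose $(A,B)$ is unmatchable. If $1 \in B$, pick $R = \{1\}$ — wait, $R$ must be a subset of $B$, which is fine, but then $\langle R\rangle = \{1\}$, $S$ would be the whole of $A$ written as a union of singleton cosets, and we need $|Y| < |R| = 1$, i.e. $Y = \emptyset$; this works, taking $S = A$, $Y = \emptyset$, $R = \{1\}$, $Z = B \setminus \{1\}$. If instead $1 \notin B$, Theorem~\ref{Complete matchings for groups} gives nonempty sets $S \subseteq A$ and $R' \subseteq B \cup \{1\}$ with $S R' = S$ and $|S| > |B \setminus R'|$. Set $R = R' \setminus \{1\} = R' \cap B \subseteq B$; since $\langle R'\rangle = \langle R\rangle$, Proposition~\ref{UnionOfCosets} still gives that $S$ is a union of cosets of $\langle R\rangle$. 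The main thing to check is that $R$ is nonempty: if $R' = \{1\}$ then $S R' = S$ automatically and $|B \setminus R'| = |B| \ge |S|$, contradicting the strict inequality, so $R' \ne \{1\}$ and hence $R \ne \emptyset$. Now $|B \setminus R'| = |B| - |R'|$; whether or not $1 \in R'$, one has $|B \setminus R'| \ge |B| - |R| - 1$, so $|S| > |B| - |R| - 1$, i.e. $|S| \ge |B| - |R| = |A| - |R|$. Set $Y = A \setminus S$ and $Z = B \setminus R$; these give the required disjoint decompositions, and $|Y| = |A| - |S| \le |R|$.

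The remaining obstacle — and the one subtle point — is that the argument above only yields $|Y| \le |R|$, whereas the theorem asserts the strict inequality $|Y| < |R|$. I expect this gap to close by sharpening the use of the criterion: when $1 \notin R'$ we actually have $|B \setminus R'| = |B| - |R|$ exactly, giving $|S| > |B| - |R|$ and hence $|Y| < |R|$ directly; and when $1 \in R'$, so that $R' = R \cup \{1\}$ with the union disjoint, I would argue that $1 \in R'$ may be assumed without loss (one can always enlarge $R'$ to include $1$ without destroying $SR' = S$, by Proposition~\ref{UnionOfCosets} again, since this does not change $\langle R'\rangle$, while it can only decrease $|B\setminus R'|$ — it decreases it by exactly $0$ when $1 \notin B$). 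Thus the "canonical" witness has $1 \in R'$, $|B \setminus R'| = |B| - |R|$, and $|S| > |B|-|R| = |A|-|R|$, yielding $|Y| = |A| - |S| < |R|$ as required. Assembling these cases gives the full equivalence.
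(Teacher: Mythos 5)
Your proposal is correct and follows essentially the same route as the paper: both directions reduce to Theorem~\ref{Complete matchings for groups} via Proposition~\ref{UnionOfCosets}, with the case $1\in B$ handled separately and the witness $R'$ adjusted by adding or deleting the identity (which leaves $\langle R'\rangle$, and hence the coset structure of $S$, unchanged). The one wrinkle you flag --- getting $|Y|<|R|$ rather than $|Y|\le|R|$ --- you resolve exactly as the paper does, by observing that since $1\notin B$ the set $B\setminus R'$ is unaffected by whether $1\in R'$, so $|B\setminus R'|=|B|-|R|$ exactly and the strict inequality carries through.
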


\noindent \emph{Note: }The conditions on the sets in the above decomposition ensure that \( S \neq \emptyset \).

\begin{proof}
Assume that the pair \( (A , B) \) is unmatchable.  Observe that if \( 1 \in B \), then we can simply take \( S = A \), \( Y = \emptyset \), \( R = \{ 1 \} \), and \( Z = B \setminus R \).  

Suppose \( 1 \notin B \). Then, by Theorem~\ref{Complete matchings for groups}, there exist sets \( S \) and \( R \) satisfying the following conditions: 
\begin{itemize}
\item[(a)] \( \emptyset \neq  S \subseteq A \) and  \( \emptyset \neq R \subseteq B \cup \{ 1 \} \),
\item[(b)] \( SR = S \), 
\item[(c)] \( |S| > |B \setminus R | \).
\end{itemize}
We see from (b) and Proposition~\ref{UnionOfCosets} that \( S \) is a disjoint union of cosets of \( \langle R \rangle \).  Note that \( R \neq \{ 1 \} \), by (c). Moreover, we may assume that  \( 1 \notin R \), since otherwise we can replace \( R \) with \( R \setminus \{ 1 \} \), and (a)--(c) will still hold.

Thus \( R \subseteq B \), and we can now rewrite (c) as
\[
|R| > |B| - |S|.
\]
Take \( Y = A \setminus S \) and \( Z = B \setminus R \). Since \( |A| = |B| \) and \( S \subseteq A \), the above inequality shows that \( |Y| < |R| \), giving us the desired decomposition.

Conversely, assume that \( A \) and \( B \) can be written as in the statement. If \( B \) contains \( 1 \), then clearly \( (A,B) \) is unmatchable, so assume \( 1 \notin B \). The sets \( S \) and \( R \) satisfy \( \emptyset \neq S \subseteq A \), \( \emptyset \neq R \subseteq B \), and \( SR = S \).  Furthermore, the inequality \( |Y| < | R | \) implies \( |S| + |R| > |S| + |Y| = |A| = |B| \), hence \( |S| > |B \setminus R| \).  Therefore, by Theorem~\ref{Complete matchings for groups}, the pair \( (A,B) \) is unmatchable.
\end{proof}

Let \(G\) be an abelian group and let \( H \) be a subgroup. A nonempty subset \(X \) of \( G \) is called $H$-\emph{periodic} if \(X\) can be written as a disjoint union of cosets of \(H\). Any decomposition as in Theorem~\ref{Structure groups},
\[
A=S\cup Y,\qquad B=R\cup Z,
\]
will be called a \emph{nearly periodic decomposition} for the unmatchable pair \((A,B)\), with $\langle R\rangle$-periodic set \(S\) and \emph{remainder} $Y.$ 

We remark that the set \( Z \) in a nearly periodic decomposition will play a passive role in this paper, receiving attention only when we are constructing unmatchable pairs \( (A,B) \) with \( 1 \notin B \), as we do in the proof of Theorem~\ref{BoundaryGroup}.

\begin{corollary}\label{Generalize Symmetric}
Let \( A \) and \( B \) be nonempty finite subsets of an abelian group $G$, with \( |A| = |B| \) and \( 1 \notin A \). Assume that for each nonempty subset \( R \) of \( B \), we have \( | \langle R \rangle \cap A | \geq |R| \).  Then the pair \( (A,B) \) is matchable.
\end{corollary}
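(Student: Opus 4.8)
The plan is to argue by contraposition: I will show that if \((A,B)\) is \emph{unmatchable}, then some nonempty \(R\subseteq B\) violates the hypothesis, i.e.\ satisfies \(|\langle R\rangle\cap A|<|R|\). The engine is Theorem~\ref{Structure groups}. Applied to the unmatchable pair \((A,B)\), it furnishes a nonempty subset \(R\subseteq B\) together with disjoint decompositions \(A=S\cup Y\) and \(B=R\cup Z\) in which \(S\) is a union of cosets of \(H:=\langle R\rangle\) and \(|Y|<|R|\). This \(R\) will be the witness; note that invoking the structure theorem here is uniform and in particular subsumes the case \(1\in B\).

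The one real point to verify is that \(\langle R\rangle\cap A\subseteq Y\), and this is exactly where the assumption \(1\notin A\) is used. Since \(S\subseteq A\) and \(1\notin A\), the identity coset \(H\) is not contained in \(S\); but \(S\) is a union of cosets of \(H\), and distinct cosets of \(H\) are disjoint, so in fact \(S\cap H=\emptyset\). Intersecting the disjoint union \(A=S\cup Y\) with \(H\) then gives \(\langle R\rangle\cap A=H\cap A=(H\cap S)\cup(H\cap Y)=H\cap Y\subseteq Y\).

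Finishing is immediate: \(|\langle R\rangle\cap A|\le|Y|<|R|\), contradicting the assumption that \(|\langle R\rangle\cap A|\ge|R|\) for every nonempty \(R\subseteq B\). Hence \((A,B)\) must be matchable. I do not anticipate a serious obstacle; the only place that demands a moment's care is the observation \(S\cap H=\emptyset\) — recognizing that a union of cosets of \(H\) which omits \(1\) must omit all of \(H\). Everything else is bookkeeping with the disjoint unions handed to us by Theorem~\ref{Structure groups} and the single cardinality inequality \(|Y|<|R|\).
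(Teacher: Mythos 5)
Your proposal is correct and follows essentially the same route as the paper: apply Theorem~\ref{Structure groups} to an unmatchable pair, observe that \(1\notin A\) forces \(\langle R\rangle\cap S=\emptyset\) because \(S\) is a union of \(\langle R\rangle\)-cosets, and conclude \(|\langle R\rangle\cap A|\le|Y|<|R|\). The only difference is that you spell out the coset-disjointness argument for \(S\cap\langle R\rangle=\emptyset\), which the paper states in one line.
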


\begin{proof}
Assume that $(A,B)$ is unmatchable.  Then the pair \( (A, B) \) has a nearly periodic decomposition
\[
A = S \cup Y, 
\qquad 
B = R \cup Z,
\]
with \( R, S, Y, Z \) as described in Theorem~\ref{Structure groups}. Note that \( \langle R \rangle \cap S = \emptyset \), since \( 1 \notin A \) and \( S \) is a union of cosets of \( \langle R \rangle \). Therefore 
\[ 
| \langle R \rangle \cap A | = | \langle R \rangle \cap Y | \leq |Y| < | R |.
\]
\end{proof}

\begin{remark}  \label{classical symmetric}
If we take \( B = A \) in Corollary~\ref{Generalize Symmetric}, with \( 1 \notin A \), we find that the hypothesis is satisfied and therefore the pair \( (A,A) \) is matchable. In this way, we recover Theorem~2.1 from \cite{Losonczy 2}. 
\end{remark}

A nonempty finite subset \( B \) of a group \( G \) is called a {\em Chowla set} if \( o(x) > |B| \) for every \( x \in B \), where \( o(x) \) denotes the order of \( x \). In \cite{Hamidoune}, Hamidoune showed that if \( B \) is a Chowla set, then the pair \( (A,B) \) is matchable for any set \( A \) with \( |A| = |B| \).   Using the structure theorem above, it is possible to obtain this fact quickly when \( G \) is abelian.

\begin{corollary}  \label{Chowla}
Let \( A \) and \( B \) be nonempty finite subsets of an abelian group \( G \), with \( |A| = |B| \).  Assume that \( B \) is a Chowla set.  Then the pair \( (A,B) \) is matchable.
\end{corollary}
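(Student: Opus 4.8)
The plan is to derive this as a direct consequence of the structure theorem (Theorem~\ref{Structure groups}), arguing by contradiction. Suppose $(A,B)$ is unmatchable. If $1 \in B$, then since $o(1) = 1 \le |B|$, the set $B$ would not be a Chowla set; so $1 \notin B$. By Theorem~\ref{Structure groups}, there is a nearly periodic decomposition $A = S \cup Y$, $B = R \cup Z$ with $\emptyset \neq R \subseteq B$, where $S$ is a union of cosets of $\langle R \rangle$ and $|Y| < |R|$.

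The key step is to bound $|R|$ from above using the Chowla hypothesis. Since $S$ is nonempty (as noted after Theorem~\ref{Structure groups}, or because $|S| = |A| - |Y| > |A| - |R| \ge 0$ when $R \subseteq B$... more carefully, $|S| + |R| > |A| = |B| \ge |R|$ forces $|S| > 0$), the subgroup $\langle R \rangle$ is finite: indeed $S$ is a finite union of cosets of $\langle R \rangle$, so each coset, hence $\langle R \rangle$ itself, is finite. Now pick any $x \in R \subseteq B$. Then $x \in \langle R \rangle$, so $o(x) = |\langle x \rangle| \le |\langle R \rangle|$. On the other hand, $\langle R \rangle$ is a subgroup, and $R \subseteq \langle R \rangle \setminus \{1\}$ (since $1 \notin R$), so $|\langle R \rangle| \ge |R| + 1$. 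Hmm — that gives a lower bound on $|\langle R \rangle|$, not an upper bound, so this needs a different angle.

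The right comparison: $\langle R \rangle$ is contained in a single coset-union making up $S \subseteq A$... no. Let me reconsider. The usable fact is that $\langle R \rangle \cap A \supseteq S$, so $|\langle R \rangle \cap A| \ge |S|$; but that's not obviously bounded either. The cleaner route is to invoke Corollary~\ref{Generalize Symmetric}. First handle $1 \in A$: if $1 \in A$ then $1 \notin B$ forces... actually we should just check the hypotheses of Corollary~\ref{Generalize Symmetric} directly. We need $1 \notin A$ and $|\langle R \rangle \cap A| \ge |R|$ for every nonempty $R \subseteq B$. For the Chowla condition, fix such an $R$ and let $x \in R$; since $B$ is Chowla, $o(x) > |B| = |A|$, so the cyclic subgroup $\langle x \rangle \subseteq \langle R \rangle$ has more than $|A|$ elements, hence $\langle R \rangle$ is infinite (or at least $|\langle R \rangle| > |A|$). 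Therefore $\langle R \rangle$ cannot be a finite union of cosets sitting inside the finite set $A$ unless... this is exactly what kills the structure theorem's $S$: $S$ would be a nonempty union of cosets of the infinite group $\langle R \rangle$, forcing $S$ infinite, contradicting $S \subseteq A$ finite. So the cleanest finish is: the existence of a nonempty $\langle R\rangle$-periodic set $S \subseteq A$ with $R \subseteq B$ nonempty forces $\langle R\rangle$ finite with $|\langle R\rangle| \le |A| = |B|$, whence any $x \in R$ has $o(x) \le |\langle R\rangle| \le |B|$, contradicting that $B$ is Chowla. I would also dispatch $1 \in B$ at the outset (as above), so the only genuine obstacle is making the "periodic subset of a finite set forces finite period" observation precise — which is routine.

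Alternatively, and perhaps more in keeping with the paper's corollary chain, I would simply verify that the Chowla hypothesis implies the hypothesis of Corollary~\ref{Generalize Symmetric}: $1 \notin A$ (else some element of $B$ has order... no, need care) — actually the slickest is to note $1 \notin B$ from Chowla, reduce to $1 \notin A$ by a symmetric observation if needed, then for each nonempty $R \subseteq B$ pick $x \in R$ and note $\langle R \rangle \supseteq \langle x \rangle$ has order $> |A|$ when finite and is infinite otherwise; in either case a nonempty union of its cosets cannot lie inside $A$, so Corollary~\ref{Generalize Symmetric}'s hypothesis $|\langle R\rangle \cap A| \ge |R|$ is vacuously irrelevant and we instead argue directly from Theorem~\ref{Structure groups}. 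I expect the main (very minor) obstacle to be choosing the cleanest of these equivalent phrasings and handling the edge case $1 \in A$ cleanly; everything else is immediate from the structure theorem.
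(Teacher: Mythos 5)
Your final argument is correct and is essentially the paper's proof: from the nearly periodic decomposition of Theorem~\ref{Structure groups}, the nonempty $\langle R\rangle$-periodic set $S\subseteq A$ forces $|\langle R\rangle|\le |A|=|B|$, so any $x\in R\subseteq B$ has $o(x)\le |B|$, contradicting the Chowla hypothesis (the paper states this contrapositively as $|\langle R\rangle|>|B|$ making the decomposition impossible). The detours through bounding $|R|$ from above and through Corollary~\ref{Generalize Symmetric} are unnecessary and rightly abandoned; the $1\in B$ edge case you dispatch at the outset is indeed vacuous for Chowla sets.
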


\begin{proof}
By the Chowla assumption, for each nonempty subset \( R \) of \( B \), we have \( | \langle R \rangle | > |B| = |A| \), and so it is impossible for \( A \) to be decomposed as in the statement of Theorem~\ref{Structure groups}.
\end{proof}

\begin{definition}  
For any group \( G \) having at least one subgroup \( H \) such that \( \{ 1 \} \subsetneq H \subsetneq G \) and \( |H| < \infty \), we define \( n_0(G) \) by 
\[
n_0(G) = \min_H \, |H|,
\]
where \( H \) ranges over all subgroups of \( G \) as described above.
\end{definition}

The result below establishes a necessary and sufficient condition for the existence of nontrivial unmatchable sets \( A, B \) whenever \( n_0(G) \) is defined and less than \( |G| \).

\begin{theorem}\label{BoundaryGroup}
Let \( G \) be an abelian group with at least one finite nontrivial proper subgroup.  Let \( n \) be an integer such that \( n_0(G) \leq n < |G| \). Then there exist subsets \( A, B \subseteq G \) such that \( 1 \notin B \), \( |A| = |B| = n \), and \( (A,B) \) is an unmatchable pair if and only if there is a subgroup \( H \) of  \( G \) with \( |H|\leq n \) and \( |H| \nmid (n+1) \).
\end{theorem}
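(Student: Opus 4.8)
The plan is to derive both implications directly from the structure theorem, Theorem~\ref{Structure groups}. The point is that a nearly periodic decomposition of an unmatchable pair $(A,B)$, with $\langle R\rangle$-periodic set $S$ and remainder $Y$, encodes — through the size constraint $|Y|<|R|$ — exactly a divisibility obstruction of the form $|\langle R\rangle|\nmid(n+1)$; conversely, any subgroup $H$ with $|H|\le n$ and $|H|\nmid(n+1)$ lets one assemble such a decomposition by hand.

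For the ``only if'' direction, I would begin with an unmatchable pair $(A,B)$ satisfying $1\notin B$ and $|A|=|B|=n$, and apply Theorem~\ref{Structure groups} to obtain a nonempty $R\subseteq B$ together with disjoint decompositions $A=S\cup Y$ and $B=R\cup Z$, where $S$ is a union of cosets of $H:=\langle R\rangle$ and $|Y|<|R|$. The first key observation is that $H$ is necessarily finite: $S$ is a nonempty union of cosets of $H$ and $S\subseteq A$ is finite, so $H$ is finite, and then $|H|\le|S|\le|A|=n<|G|$ shows $H$ is a finite proper subgroup. Writing $h=|H|$ and letting $k\ge1$ denote the number of cosets of $H$ comprising $S$, so $|S|=kh$, I would use $1\notin B$ (hence $1\notin R$, hence $R\subseteq H\setminus\{1\}$) to get $|R|\le h-1$. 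Combining $|Y|=n-kh\ge0$ with $|Y|<|R|\le h-1$ yields
\[ kh<n+1<(k+1)h, \]
so $n+1$ lies strictly between consecutive multiples of $h$, whence $h\nmid(n+1)$. This is the required subgroup (and $h\nmid(n+1)$ forces $h\ge2$, so $H$ is also nontrivial).

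For the ``if'' direction, suppose $H\le G$ is finite with $h:=|H|\le n$ and $h\nmid(n+1)$; note $h\ge2$. I would take $R:=H\setminus\{1\}$, which generates $H$ and satisfies $|R|=h-1$, and write $n=qh+r$ with $q\ge1$ (since $n\ge h$) and $0\le r\le h-1$. Because $h\nmid(n+1)$, necessarily $r\ne h-1$, so $r\le h-2<|R|$. Using $n<|G|$ (equivalently $|G|\ge n+1$) to guarantee the needed room, I would choose $S$ to be a union of $q$ cosets of $H$, a set $Y\subseteq G\setminus S$ with $|Y|=r$, and a set $Z\subseteq G\setminus H$ with $|Z|=n-h+1$, and put $A=S\cup Y$ and $B=R\cup Z$. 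These are disjoint unions with $|A|=|B|=n$, with $1\notin B$ since $1\notin R$ and $Z\cap H=\emptyset$, with $S$ a union of cosets of $\langle R\rangle$, and with $|Y|=r<|R|$; hence Theorem~\ref{Structure groups} gives that $(A,B)$ is unmatchable.

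I do not anticipate a deep obstacle, since the substance is carried by Theorem~\ref{Structure groups}. The step most in need of attention is the forward direction: one must notice that $\langle R\rangle$ is automatically finite — otherwise the very object $H$ is not available — and then correctly pass from the weak inequalities $kh\le n$ and $n-kh\le h-2$ to the sharp double inequality $kh<n+1<(k+1)h$. The reverse direction is routine apart from checking that $|G|\ge n+1$ suffices to realize $q$ cosets of $H$ together with the remainder sets $Y$ and $Z$ inside $G$.
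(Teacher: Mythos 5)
Your proposal is correct and follows essentially the same route as the paper's proof: both directions extract the subgroup $H=\langle R\rangle$ from a nearly periodic decomposition via Theorem~\ref{Structure groups}, derive $kh<n+1<(k+1)h$ to get $|H|\nmid(n+1)$, and conversely build $A=S\cup Y$, $B=R\cup Z$ with $R=H\setminus\{1\}$ and $n=qh+r$, $r\le h-2$. The only differences are cosmetic (your explicit remarks that $H$ is nontrivial and proper, and your flagging of the counting needed to place $Y$ and $Z$ inside $G$).
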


\begin{proof}
Suppose an unmatchable pair $(A,B)$ as described in the statement exists. Then, by Theorem~\ref{Structure groups}, there is a nonempty subset $R$ of $B$ such that $A$ and $B$ can be expressed as disjoint unions
\[
A = S \cup Y, 
\qquad 
B = R \cup Z,
\]
where $S$ is a nonempty disjoint union of cosets of $\langle R \rangle$, and the remainder $Y$ satisfies $|Y| < |R|$.  We will show that $\langle R \rangle$ can serve as the desired subgroup \( H \) of $G$. 

First note that \( \langle R \rangle \) is finite, since \( S \) is.  Let \( m = |\langle R \rangle| \) and write \( |S| = mq \), where $q \geq 1$. Then
\[ n = |S| + |Y| = mq + |Y|.
\]
Clearly \( m \leq mq \leq n \). We also have \( |Y| < |R| \leq m-1 \), as \( 1\notin R \), and so 
\[
mq \leq n \leq mq + m-2.
\]
Hence
\[
mq < mq + 1 \leq n + 1 \leq mq + m-1 < m(q+1).
\]
From this we see that $m \nmid (n+1)$, as desired.

Conversely, assume that $G$ has a subgroup $H$ with $|H| \leq n$ and $|H| \nmid (n+1)$. Let \( m = |H| \) and note that \( m \geq n_0(G) > 1 \). We can write 
\[
n = mq + r 
\]
for some integers $q\geq 1$ and $0\leq r \leq m-2$. Set $R=H\setminus\{1\}$ and let $S$ be a disjoint union of $q$ distinct cosets of $\langle R \rangle$. Then $|S| = mq$. Let $Y \subseteq G \setminus S$ with $|Y| = r$, and let $Z \subseteq G \setminus (R \cup \{1\})$ with $|Z|=n-m+1$. Finally, define
\[
A = S \cup Y, 
\qquad 
B = R \cup Z.
\]
Then $1 \notin B$ and $|A|=|B|=n$.  Further, all of the conditions in Theorem~\ref{Structure groups} are satisfied, implying that $(A,B)$ is unmatchable.
\end{proof}

\begin{example}
Using Theorem~\ref{BoundaryGroup}, we can see that in the additive group \( \mathbb{Z}/4\mathbb{Z} \), every pair \( (A,B) \) of $3$-element subsets with \( \bar{0} \notin B \) is matchable, since \( \mathbb{Z}/4\mathbb{Z} \) has no subgroup \( H \) satisfying \( |H| \leq 3 \) and \( |H| \nmid 4 \).  On the other hand, in \( \mathbb{Z}/12\mathbb{Z} \), the $5$-element subsets
\[
A=\{\bar{0},\bar{1},\bar{3},\bar{6},\bar{9}\}, 
\qquad 
B=\{\bar{1},\bar{2},\bar{3},\bar{6},\bar{9}\}
\]
(with $\bar{0} \notin B$) form an unmatchable pair.  This is consistent with Theorem~\ref{BoundaryGroup}, since the group \( \mathbb{Z}/12\mathbb{Z} \) does have a subgroup \( H \) satisfying \( |H| \leq 5 \) and \( |H| \nmid 6 \), namely \( H = \{ 0,3,6,9 \} \).
\end{example}

In the following corollary, we establish a connection between unmatchable sets in a given abelian group  \( G \) and their images in a quotient of \( G \).

\begin{corollary}\label{quotient}
Let $G$ be an abelian group and let $(A,B)$ be a pair of nonempty finite subsets with $|A|=|B|$. Suppose $(A,B)$ is unmatchable, as witnessed by the decomposition given in Theorem~\ref{Structure groups}:
\[
A = S \cup Y,
\]
where $S$ is a disjoint union of cosets of the subgroup $\langle R\rangle$ for some nonempty $R\subseteq B$, and $|Y|<|R|$.  Suppose $H$ is a subgroup of $G$ satisfying $H\cap (SS^{-1}\cup RR^{-1})=\{1\}$, and let $\pi : G\to G/H$ be the corresponding quotient map. Then the projected pair $(\pi(A),\pi(B))$ is unmatchable in $G/H$.
\end{corollary}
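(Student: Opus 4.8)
The plan is to show that the hypothesis on $H$ forces $\pi$ to behave injectively on the relevant pieces, so that the nearly periodic decomposition of $(A,B)$ pushes forward to a nearly periodic decomposition of $(\pi(A),\pi(B))$, whence Theorem~\ref{Structure groups} applies. First I would record the cardinality consequences of $H\cap(SS^{-1}\cup RR^{-1})=\{1\}$: if $s,s'\in S$ with $\pi(s)=\pi(s')$ then $s(s')^{-1}\in H\cap SS^{-1}=\{1\}$, so $s=s'$; hence $\pi|_S$ is injective and $|\pi(S)|=|S|$. The same argument with $RR^{-1}$ gives $|\pi(R)|=|R|$. I would also need $\pi|_Y$ and $\pi|_Z$ to be at worst non-expanding, which is automatic: $|\pi(Y)|\le|Y|$ and $|\pi(Z)|\le|Z|$ for any map.

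Next I would produce the decomposition of $\pi(A)$ and $\pi(B)$. Set $\widetilde{R}=\pi(R)$, $\widetilde{S}=\pi(S)$. Since $S$ is a union of cosets of $\langle R\rangle$, its image $\widetilde S$ is a union of cosets of $\pi(\langle R\rangle)=\langle\pi(R)\rangle=\langle\widetilde R\rangle$ in $G/H$ — this uses that $\pi$ is a homomorphism, so it carries cosets of a subgroup to cosets of the image subgroup, and it carries the generated subgroup to the generated subgroup. Thus $\widetilde S$ is $\langle\widetilde R\rangle$-periodic (and nonempty). Now $\pi(A)=\widetilde S\cup\pi(Y)$ and $\pi(B)=\widetilde R\cup\pi(Z)$, though these need not yet be \emph{disjoint} unions, and $\pi(A),\pi(B)$ need not have equal cardinality after projection. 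To handle this cleanly I would instead argue directly from Theorem~\ref{Complete matchings for groups} applied in $G/H$: take $S^\ast=\widetilde S$ and $R^\ast=\widetilde R$; we have $\emptyset\ne S^\ast\subseteq\pi(A)$, $\emptyset\ne R^\ast\subseteq\pi(B)\cup\{1\}$, and $S^\ast R^\ast=S^\ast$ by periodicity. It remains to verify $|S^\ast|>|\pi(B)\setminus R^\ast|$. We have $|S^\ast|=|S|$ and $|\pi(B)\setminus R^\ast|\le|\pi(B)|-|R^\ast|$ provided $R^\ast\subseteq\pi(B)$, i.e. provided $1\notin\widetilde R$; and $|\pi(B)|\le|B|$. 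The original inequality $|S|>|B|-|R|$ (equivalently $|S|+|R|>|B|$, coming from $|Y|<|R|$ and $|A|=|B|$) then gives $|S^\ast|=|S|>|B|-|R|\ge|\pi(B)|-|R^\ast|\ge|\pi(B)\setminus R^\ast|$, so $(\pi(A),\pi(B))$ fails the matchability criterion and is unmatchable.

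One subtlety to address: if $1\in B$ the statement is immediate since $1=\pi(1)\in\pi(B)$ makes $(\pi(A),\pi(B))$ unmatchable trivially, so I may assume $1\notin B$; then in the decomposition we have $1\notin R$, and I must check $1\notin\widetilde R=\pi(R)$, i.e. $H\cap R=\emptyset$. This follows because any $r\in H\cap R$ would give $r\cdot r^{-1}$-type elements — more precisely, if $r\in R\cap H$ then for any $r'\in R$ we'd have... actually the cleanest route: $r\in R$ and $1\in R\cup\{1\}$ need not interact, so instead note $R\subseteq B$ and if some $r\in R$ lay in $H$, then since $R$ is nonempty pick any $r_0\in R$; then $r r_0^{-1}\in RR^{-1}\cap H=\{1\}$ forces $r=r_0$ for all such, so $R\cap H$ has at most one element, and if that element $r$ satisfies $r\in H$ then also $r^{-1}\in H$, but we still need to rule it out. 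Here I would invoke that $r\in R\cap H$ together with $1\in H$ gives $r^{-1}\cdot 1=r^{-1}\in H$; this alone is not a contradiction, so the genuinely needed observation is: if $r\in R\cap H$ then $\pi(r)=1$, and we simply absorb this by replacing $R^\ast$ with $R^\ast\setminus\{1\}$ exactly as in the proof of Theorem~\ref{Structure groups} — the inequality $|S^\ast|>|\pi(B)\setminus R^\ast|$ is only strengthened by removing $1$ from $R^\ast$, and $S^\ast R^\ast=S^\ast$ still holds since removing the identity from a multiplier set cannot change a set it already stabilizes. The main obstacle, then, is bookkeeping rather than depth: ensuring that projection does not collapse $S$ or $R$ (guaranteed by the $SS^{-1}\cup RR^{-1}$ hypothesis) while allowing possible collapse in $Y$ and $Z$, and confirming that such collapse only helps the inequality. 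Assembling these observations yields the result via Theorem~\ref{Complete matchings for groups}.
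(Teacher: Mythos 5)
Your proposal is correct and follows essentially the same route as the paper: the hypothesis on $H$ makes $\pi$ injective on $S$ and on $R$, so the nearly periodic decomposition and the gap inequality $|Y|<|R|$ push forward to the quotient; the paper simply invokes the converse direction of Theorem~\ref{Structure groups} in $G/H$ rather than unwinding to Theorem~\ref{Complete matchings for groups}, which lets it sidestep your meandering $1\in\pi(R)$ discussion, since that converse already absorbs the case $1\in\pi(B)$. The two loose ends in your write-up --- the possibility $|\pi(A)|\neq|\pi(B)|$, and $1\in\pi(B)$ arising from some element of $B$ lying in $H$ (where Theorem~\ref{Complete matchings for groups} is not applicable at all, so the ``remove $1$ from $R^{\ast}$'' patch is not the right fix) --- are both settled by observing that the projected pair is then trivially unmatchable, which is exactly how the paper opens its proof.
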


\begin{proof}
First notice that if $|\pi(A)|\neq |\pi(B)|$, then the pair $(\pi(A),\pi(B))$ is unmatchable and we are done. Suppose $|\pi(A)|= |\pi(B)|$.  Define $Y'=\pi(A)\setminus \pi(S)$ and $Z'=\pi(B)\setminus \pi(R)$. Then we obtain the decompositions
\[
\pi(A)=\pi(S)\cup Y' \quad\text{and}\quad \pi(B)=\pi(R)\cup Z'.
\]
These satisfy the conditions of Theorem \ref{Structure groups}, since 
\begin{itemize}
    \item $\pi(S)$ is a nonempty disjoint union of cosets of $\langle \pi(R)\rangle$,
    \item $|Y'|<|\pi(R)|$.
\end{itemize}
To see why the latter holds, observe first that 
\begin{align*}
|Y'|=|\pi(A)|-|\pi(S)|\leq |A|-|\pi(S)|. 
\end{align*}
Now, \( |\pi(S)|=|S| \) and \( |\pi(R)| = |R| \) since \( H\cap (SS^{-1}\cup RR^{-1})=\{1\} \). Combining this with the above, we obtain
\[
|Y'|\leq |A|-|S|=|Y|<|R|=|\pi(R)|.
\]
Therefore, $(\pi(A), \pi(B))$ is unmatchable in $G/H$.
\end{proof}

\begin{remark}
The assumption $H\cap (SS^{-1}\cup RR^{-1})=\{1\}$ in Corollary~\ref{quotient} cannot be dropped. To see this, consider the additive group $G=\mathbb{Z}/8\mathbb{Z}$, the subgroup \( H = \{\bar{0},\bar{4}\} = 4\mathbb{Z}/8\mathbb{Z} \), and the corresponding quotient map 
\[
\pi:G\rightarrow G/H\;\cong\;\mathbb{Z}/4\mathbb{Z}.
\]
Define
\[
A=\{\bar{0},\bar{1},\bar{2},\bar{4},\bar{6}\},\qquad 
B=\{\bar{1},\bar{2},\bar{3},\bar{5},\bar{6}\}.
\]
The following decomposition of $A$ and $B$ satisfies the assumptions of Theorem~\ref{Structure groups}: 
\[
A=S\cup Y,\qquad B=R\cup Z,
\]
where $R=\{\bar{2},\bar{6}\}$, $S=\bar{0}+\langle R\rangle=\{\bar{0},\bar{2},\bar{4},\bar{6}\}$, 
$Y=\{\bar{1}\}$, and $Z=\{\bar{1},\bar{3},\bar{5}\}$. 
Hence $(A,B)$ is unmatchable in $\mathbb{Z}/8\mathbb{Z}$. Note that  $H\cap \big( (S-S)\cup (R-R)\big) \neq\{\bar{0}\}$.

We now compute $\pi(A)$ and $\pi(B)$:
\[
\pi(A)=\{\bar{0},\bar{1},\bar{2}\}\subseteq \mathbb{Z}/4\mathbb{Z},\qquad 
\pi(B)=\{\bar{1},\bar{2},\bar{3}\}\subseteq \mathbb{Z}/4\mathbb{Z}.
\]
These are matchable: for instance, the bijection $\bar{0}\mapsto \bar{3}$, $\bar{1}\mapsto \bar{2}$, $\bar{2}\mapsto \bar{1}$ is a matching since each sum lands at $\bar{3}$, which lies outside $\pi(A)$. 

Thus $(A,B)$ is unmatchable in $G$, but $(\pi(A),\pi(B))$ is matchable in $G/H$. 
\end{remark}

\section{Structure theorem for unmatchable subspaces in a field extension}\label{Linear structure section}

Let \( K \subseteq L \) be a field extension.  Given a $K$-subspace \( S \) of \( L \), we will write \( \dim_K S \) or \( \dim S \) for its $K$-dimension, preferring the latter in situations where no confusion can arise. 

The following theorem, a main result of \cite{Aliabadi 5}, provides a framework for studying matchable subspaces in field extensions.  We will apply it to obtain a structure theorem for unmatchable pairs $(A,B)$.

\begin{theorem}\label{main linear}
Let \( K \subsetneq L \) be a field extension, and let \( A \) and \( B \) be two \( n \)-dimensional \( K \)-subspaces of \( L \), with \( n > 0 \) and \( 1 \notin B \). Then \( A \) is matched to \( B \) if and only if, for every pair of nonzero \( K \)-subspaces \( S \subseteq A \) and \( R \subseteq B\oplus K \) with \( \langle SR \rangle = S \), we have
\[
\dim S\leq \dim \bigl(B/(R \cap B)\bigr).
\]
\end{theorem}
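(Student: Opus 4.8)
Theorem~\ref{main linear} is the linear counterpart of Theorem~\ref{Complete matchings for groups}, and the plan is to prove it by an argument parallel to the group case treated in \cite{Aliabadi 5}: the necessity of the dimension inequality is a short direct computation, while its sufficiency is the substantial half and will be obtained through a linear Dyson transform.

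\emph{Necessity.} Suppose $A$ is matched to $B$, and let $S\subseteq A$ and $R\subseteq B\oplus K$ be nonzero $K$-subspaces with $\langle SR\rangle=S$. Put $k=\dim S$ and pick an ordered basis $(a_1,\dots,a_n)$ of $A$ whose first $k$ vectors span $S$. By hypothesis this basis is matched to some ordered basis $(b_1,\dots,b_n)$ of $B$, so that $a_i^{-1}A\cap B\subseteq\langle b_j:j\neq i\rangle$ for every $i$. For $i\le k$ we have $a_i\in S$, hence $a_iR\subseteq SR\subseteq\langle SR\rangle=S\subseteq A$; thus $R\subseteq a_i^{-1}A$, and therefore $R\cap B\subseteq a_i^{-1}A\cap B\subseteq\langle b_j:j\neq i\rangle$. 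Intersecting over $i=1,\dots,k$ gives $R\cap B\subseteq\langle b_{k+1},\dots,b_n\rangle$, whence $\dim(R\cap B)\le n-k$; equivalently $\dim S\le\dim\bigl(B/(R\cap B)\bigr)$.

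\emph{Sufficiency.} Here I would first recast what ``$A$ is matched to $B$'' amounts to. Fix an ordered basis $(a_1,\dots,a_n)$ of $A$ and set $W_i=a_i^{-1}A\cap B$. Passing to dual bases, matching $(a_1,\dots,a_n)$ to some ordered basis of $B$ is equivalent to selecting functionals $\beta_1,\dots,\beta_n\in B^*$, forming a basis of $B^*$, with $\beta_i$ vanishing on $W_i$ for each $i$; by Rado's theorem (the matroid form of Hall's theorem) such a selection exists if and only if $\dim\bigcap_{i\in I}W_i\le n-|I|$ for every $I\subseteq[n]$. Since $\bigcap_{i\in I}W_i=\{x\in B:\langle a_i:i\in I\rangle\,x\subseteq A\}$, letting the basis and $I$ vary shows that $A$ is matched to $B$ precisely when
\[
\dim\{x\in B:Vx\subseteq A\}\le n-\dim V\qquad\text{for every }K\text{-subspace }V\subseteq A .
\]
It then remains to derive this family of inequalities from the hypothesis of the theorem, which I would carry out by induction on $n$: the case $n=1$ follows at once from $1\notin B$, and for the inductive step one chooses a suitable $e\in L^\times$, replaces $(A,B)$ by the pair produced by a linear Dyson transform (in the spirit of the group-case transform of \cite{Aliabadi 5}), uses the modular law $\dim(U+W)+\dim(U\cap W)=\dim U+\dim W$ to keep the relevant dimensions in step, and applies the inductive hypothesis to the smaller pair before lifting back.

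The crux lies in the gap between the two conditions. Unpacking the hypothesis, one observes that $\langle SR\rangle=S$ forces $Sr=S$ for every nonzero $r\in R$, so that $R$ lies in the subfield $F_S=\{x\in L:Sx\subseteq S\}$ of $L$ and $S$ is an $F_S$-subspace, the largest admissible $R$ being $F_S\cap(B\oplus K)$; hence the hypothesis is equivalent to $\dim S+\dim(F_S\cap B)\le n$ for every nonzero $S\subseteq A$, which is a priori weaker than matchability's demand that $\dim V+\dim\{x\in B:Vx\subseteq A\}\le n$ for all $V\subseteq A$ (weaker because $F_S\cap B\subseteq\{x\in B:Sx\subseteq A\}$). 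Upgrading the former to the latter is exactly what the Dyson transform should accomplish, and the step I expect to be hardest is verifying that the transformed pair inherits the hypothesis: this requires controlling how the stabilizer subfields $F_S$ behave under the transform, and it is precisely here that the field-extension structure, rather than merely the structure of the multiplicative group $L^\times$, is genuinely used.
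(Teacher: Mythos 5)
First, note that the paper does not prove Theorem~\ref{main linear} at all: it is quoted as a main result of \cite{Aliabadi 5}, so there is no in-paper argument to compare yours against; your proposal has to stand on its own. The parts you actually carry out are sound. The necessity direction is correct and complete: from $\langle SR\rangle=S$ you get $a_iR\subseteq SR\subseteq S\subseteq A$ for each basis vector $a_i\in S$, hence $R\cap B\subseteq a_i^{-1}A\cap B\subseteq\langle b_j:j\neq i\rangle$, and intersecting over the first $k$ indices gives $\dim(R\cap B)\le n-k$. Your reformulation of matchability via annihilators and Rado's theorem --- that $A$ is matched to $B$ iff $\dim\{x\in B:Vx\subseteq A\}\le n-\dim V$ for every subspace $V\subseteq A$ --- is also correct, as is your unpacking of the hypothesis as $\dim S+\dim(F_S\cap B)\le n$ with $F_S=\{x\in L:Sx\subseteq S\}$ (and you rightly observe that $Sr=S$ for nonzero $r\in R$ by a dimension count, so $F_S$ is an intermediate field).

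The gap is that the sufficiency direction --- the substantial half of the theorem --- is never proved. You correctly diagnose that the whole content lies in upgrading the a priori weaker condition $\dim S+\dim(F_S\cap B)\le n$ to the stronger condition $\dim V+\dim\{x\in B:Vx\subseteq A\}\le n$, and you propose an induction on $n$ via a linear Dyson transform; but you do not define the transform, do not verify that the transformed pair inherits the hypothesis (you explicitly flag this as the step you expect to be hardest), and do not show how the inequality for the smaller pair lifts back. Since this is exactly where the field structure, Proposition~\ref{proposition linear}, and the stabilizer subfields must interact, the argument as written is a plan rather than a proof: everything you have established is either the easy direction or a restatement of what must be shown. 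To complete it you would need to supply the linear $e$-transform $(A,B)\mapsto(A+eB',\,B'\cap e^{-1}A)$ (or whatever variant \cite{Aliabadi 5} uses), prove it preserves the quantified hypothesis of the theorem, and close the induction.
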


The proposition below gives an indication of how the condition \( \langle SR \rangle = S \) in Theorem~\ref{main linear} will be used in the rest of this section.  In the proposition and elsewhere, we make use of the following notation and terminology from field theory.  Let \( K \subseteq L \) be a field extension and let \( R \) be a subset of \( L \).  We write \( K(R) \) for the smallest subfield of \( L \) containing the set \( K \cup R \).  If \( R = \{ x_1, \ldots , x_n \} \), we may also write \( K(x_1,\ldots ,x_n)\) for \( K(R) \), and we write \( K[x_1,\ldots ,x_n] \) for the smallest subring of \( L \) containing \( K \cup \{x_1, \ldots,x_n \} \).

For any intermediate field \( F \) of the extension \( K \subseteq L \), we write \( [F:K] \) for the $K$-dimension of \( F \). We call this dimension the \emph{degree} of \( F \) over \( K \).  For \( x \in L \), we say that \( x \) is \emph{algebraic} over \( K \) if the degree of \( K(x) \) over \( K \) is finite.

\begin{proposition} \label{proposition linear}
Let \( K \subseteq L \) be a field extension, let \( n \) be a positive integer, and let \( S \) and \( R \) be nonzero \( K \)-subspaces of \( L \).  Assume that \( \langle SR \rangle = S \) and  \( \dim_K S \leq n \). The following statements hold: 
\begin{itemize}
\item[(i)] Let \( x \in R \). Then \( aK(x) \subseteq S \) for all \( a \in S \). In addition, \( [K(x):K] \leq n\), so that \( x \) is algebraic over $K$.    
\item[(ii)] We in fact have \( aK(R) \subseteq S \) for all \( a \in S \). Also, \( [K(R):K] \leq n \) and \( \dim_K S \) is a positive multiple of \( [K(R):K] \).
\end{itemize}
\end{proposition}

\begin{proof}
Part (i) is proved in \cite{Aliabadi 5}. For (ii), observe that for any \( a \in S\setminus \{ 0 \} \), we have 
\( \dim_K R = \dim_K aR \) and \( aR \subseteq \langle SR \rangle = S \).  Hence \( \dim_K R \leq \dim_K S \leq n \). Let \( \{ x_1, \ldots , x_t \} \) be a $K$-basis for \( R \). According to (i), each basis element \( x_j \) is algebraic over \( K \), giving us \( K(R) = K(x_1, \ldots ,x_t) = K[x_1, \ldots ,x_t] \).  We also have \( aK(x_j) \subseteq S \) for all \( a \in S \) and $j \in [t]$, again by (i). It is now clear that \( aK(R) \subseteq S \) for all \( a \in S \). In other words, \( S \) can be regarded as a vector space over \( K(R) \).  

For the rest of (ii), consider the equation 
\[
\dim_K S = \dim_{K(R)} S \cdot [K(R):K]. 
\]
Since \( 0 < \dim_K S \leq n < \infty\), it follows that \( [K(R):K] \leq n \) and \( \dim_K S \) is a positive multiple of the degree \( [K(R):K] \).
\end{proof}

We are ready to state and prove one of the main results of this section, giving a structural decomposition for unmatchable pairs \((A,B)\) over a field extension \( K \subsetneq L \). This theorem is a linear analogue of Theorem~\ref{Structure groups}, but there is more to notice about it. In the field extension setting, matchability is controlled by the lattice of intermediate fields: unmatchability forces $A$ to be almost a sum of translates of an intermediate field $K(R)$ for some $K$-subspace $R$, with a remainder subspace $Y$ having smaller dimension than $R$.  Equivalently, if we view $L$ as a vector space over $K(R)$ via multiplication, Theorem~\ref{linearStruc} shows that unmatchability forces a substantial portion of $A$ to be a $K(R)$-subspace of $L$, again with a small remainder subspace $Y$.  This yields a structural characterization of the failure of matchability which depends on the extension $K\subsetneq L$ and leads to results that are most naturally formulated in terms of properties of intermediate extensions (for instance, Corollary~\ref{AnalogueGenSymmetric} and Theorem~\ref{BoundaryLin}).

\begin{theorem}\label{linearStruc}
Let \( K \subsetneq L \) be a field extension, and let \( A \) and \( B \) be two \( n \)-dimensional \( K \)-subspaces of \( L \), with \( n > 0 \). Then the pair \( (A, B) \) is unmatchable if and only if there exists a nonzero $K$-subspace \( R \) of \( B \) such that \( A \) and \( B \) can be written as direct sums
\[
A = S \oplus Y, \qquad
B = R \oplus Z,
\]
where \( \dim Y < \dim R \) and
\[
S = \bigoplus_{i=1}^m a_i K(R)
\]
for some \( a_1, \ldots ,a_m \in S \).  Here, \( m = \dim_{K(R)}S = (\dim_K S) / [K(R):K] \).
\end{theorem}

\noindent \emph{Note: }The dimension bound on $Y$ guarantees that \( S \neq \{ 0 \}\).

\begin{proof}
Assume that the pair \( (A,B) \) is unmatchable. If \(1\in B\), define \(S = A\), \(Y = \{0\}\), \(R = K\), and choose \(Z\) to be a \( K \)-linear complement of \(K\) in \(B\) (so that \(B = K\oplus Z\)). The required conditions are then satisfied (we can take \( a_1,\ldots, a_m  \) in the statement to be the elements of any $K$-basis of \( A \)).

Now suppose \( 1 \notin B \). By Theorem \ref{main linear}, there exist nonzero subspaces \( S \) and \( R \) satisfying the following:
\begin{itemize}
    \item[(a)] \( S \subseteq A \) and \( R \subseteq B \oplus K \),
    \item[(b)] \( \langle SR \rangle = S \),
    \item[(c)] \( \dim S > \dim \bigl( B/(R \cap B)\bigr) \).
\end{itemize}
By condition (b) and Proposition~\ref{proposition linear}, we have \( aK(R) \subseteq S \) for all \( a \in S \). Thus \( S \) can be regarded as a vector space over \( K(R) \). As such, it has finite dimension, since \( \dim_{K(R)}S \cdot [K(R):K] = \dim_K S \leq n\). Hence there exist \( a_1, \ldots , a_m \in S \) such that 
\[ 
S = \bigoplus_{i=1}^m a_i K(R), 
\]
where \( m = \dim_{K(R)}S = (\dim_K S) / [K(R):K] \).  Note that \( R \neq K \) by (c), and we may assume that \( R \subseteq B \); otherwise, we can replace \( R \) by \( \pi_B(R) \), where \( \pi_B \) is the projection map \( B \oplus K \rightarrow B \) along \( K \), and conditions (a)–(c) will still hold. Hence condition (c) gives us
\[
\dim R > \dim B - \dim S.
\]
Now let \( Y \) be a complementary summand of \( A \) relative to \( S \), and \( Z \) be a complementary summand of \( B \) relative to \( R \). Since \( \dim A = \dim B = n \), it follows from the inequality above that \( \dim Y < \dim R \), as desired.

Conversely, assume that \( A \) and \( B \) can be written as in the statement. If \( 1 \in B \), then the pair \( (A, B) \) is unmatchable, so suppose \( 1 \notin B \). Then \( S \) and \( R \) are nonzero subspaces with \( S \subseteq A \), \( R \subseteq B \), and \( \langle SR \rangle = S \). Furthermore, the inequality \( \dim Y < \dim R \) implies
\[
\dim R + \dim S > \dim Y + \dim S = \dim A = \dim B.
\]
This yields
\[
\dim S > \dim B - \dim R = \dim (B/R)  = \dim \bigl(B/(B\cap R)\bigr).
\]
Therefore, by Theorem \ref{main linear}, the pair \( (A, B) \) is unmatchable.
\end{proof}

In Table~\ref{tab:decomp-dictionary-grid} below, we summarize the main information regarding unmatchable pairs in abelian groups and over field extensions.

\begin{table}[ht] 
\small
\centering
\renewcommand{\arraystretch}{1.15}
\begin{tabularx}{\linewidth}{|l|>{\raggedright\arraybackslash}X|>{\raggedright\arraybackslash}X|}
\hline
& \textbf{Group setting} & \textbf{Linear setting} \\
\hline
Objects
& $A,B\subseteq G$, $0<|A|=|B|<\infty$
& \mbox{$K$-subspaces $A,B \subseteq L$}, $0 < \dim A=\dim B < \infty$ \\
\hline
Decomposition
& $A=S\cup Y$, $B=R\cup Z$ (disjoint unions)
& $A=S\oplus Y$, $B=R\oplus Z$ (direct sums) \\
\hline
Structure of $S$
& $S=a_1\langle R\rangle \cup \cdots \cup a_m \langle R \rangle $ (pairwise disjoint union)
& $S=a_1K(R)\oplus \cdots \oplus a_mK(R)$  (direct sum)\\
\hline
Gap condition
& $|Y|<|R|$
& $\dim Y<\dim R$ \\
\hline
Approach
& Find $\emptyset \neq R\subseteq B$ with $S$ as above and $|Y|<|R|$
& Find $\{0\}\neq R\subseteq B$ with $S$ as above and $\dim Y<\dim R$ \\
\hline
Consequence
& $(A,B)$ is an unmatchable pair
& $(A,B)$ is an unmatchable pair \\
\hline
\end{tabularx}
\vspace{0.4cm}
\caption{Side-by-side dictionary for the decomposition of unmatchable pairs.}
\label{tab:decomp-dictionary-grid}
\end{table}

For our first application of Theorem~\ref{linearStruc}, we give an alternate proof of Theorem~4.15 from \cite{Aliabadi 4}.

\begin{corollary}
Let \( K \subsetneq L \) be a field extension, and let \( A \) and \( B \) be two \( n \)-dimensional \( K \)-subspaces of \( L \), with \( n > 0 \).  
Suppose that \( A \) is matched to \( B \). Then \( \langle AB \rangle \neq A \).
\end{corollary}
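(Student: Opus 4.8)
The plan is to argue by contraposition: assume $\langle AB\rangle = A$ and show that $A$ is not matched to $B$. The idea is that $\langle AB\rangle = A$ is precisely the statement $\langle SR\rangle = S$ from Theorem~\ref{main linear} with the maximal choices $S = A$ and $R = B$, so we should be able to feed this directly into the characterization of matchability and arrive at a contradiction with the dimension inequality. Concretely, first I would note that if $1 \in B$, then $(A,B)$ is unmatchable outright, so we may assume $1 \notin B$ and apply Theorem~\ref{main linear}.

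Next I would set $S = A$ and $R = B$, which are nonzero $K$-subspaces with $S \subseteq A$ and $R \subseteq B \subseteq B \oplus K$. The hypothesis $\langle AB\rangle = A$ says exactly $\langle SR\rangle = S$, so the pair $(S,R)$ is admissible in the sense of Theorem~\ref{main linear}. The matchability criterion would then require
\[
\dim S \leq \dim\bigl(B/(R\cap B)\bigr) = \dim\bigl(B/B\bigr) = 0,
\]
since $R \cap B = B$. But $\dim S = \dim A = n > 0$, a contradiction. Hence $A$ is not matched to $B$, which is the contrapositive of the desired statement. Alternatively, one can phrase the same argument through Theorem~\ref{linearStruc} directly: take $R = B$, $Z = \{0\}$, $S = A$, $Y = \{0\}$; then $S = \sum_{a \in S} aK(x)$ for every $x \in R$ because $\langle AB\rangle = A$ forces $S$ to be a $K(x)$-module for each $x \in B$ (via Proposition~\ref{proposition linear}), and $\dim Y = 0 < n = \dim B = \dim R$, so $(A,B)$ is unmatchable.

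I do not expect a serious obstacle here; the statement is essentially a direct unwinding of Theorem~\ref{main linear} at the extreme endpoint of the quantifier. The only point requiring a line of care is the edge case $1 \in B$, which must be dispatched separately since Theorem~\ref{main linear} presupposes $1 \notin B$ — but in that case unmatchability (hence $A$ not matched to $B$) is immediate from the necessary condition recalled in Section~\ref{prelims for field extensions}. A second minor check is that $S = A$ and $R = B$ genuinely satisfy the hypotheses of the cited theorem (nonzero, correct containments), which is transparent. So the proof is short: reduce to $1 \notin B$, instantiate the criterion with $S = A$, $R = B$, and observe the forced inequality $n \le 0$ is absurd.
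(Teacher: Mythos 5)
Your proposal is correct and matches the paper's argument: the paper proves the contrapositive by taking $S=A$, $R=B$, $Y=Z=\{0\}$ and invoking Proposition~\ref{proposition linear} together with Theorem~\ref{linearStruc}, which is exactly your ``alternative'' phrasing. Your primary phrasing through Theorem~\ref{main linear} (forcing $\dim A \le \dim(B/B)=0$) is the same argument one layer down, with the only difference being that it requires the explicit $1\in B$ case split, which you handle correctly.
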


\begin{proof}
Assume that \( \langle AB \rangle = A \).  Set \( S = A \), \( R = B \), and \( Y = Z = \{0\} \). By Proposition~\ref{proposition linear}, we have \( aK(R) \subseteq S \) for all \( a \in S \).  It is now easy to see that the sets \( S, R, Y, Z \) satisfy all of the hypotheses of Theorem~\ref{linearStruc}.  Therefore \( A \) is not matched to \( B \).
\end{proof}

Let \( K \subsetneq L \) be a field extension, and let \( B \) be a \( K \)-subspace of \( L \).  
We say that \( B \) is a \emph{Chowla subspace} if, for every \( x \in B \setminus \{0\} \),  
\[
[K(x) : K] \geq \dim B + 1.
\]

In particular, if \( B \) is a Chowla subspace, then \( 1 \notin B \). The next result, originally proved in \cite{Aliabadi 5}, is retrieved easily using Theorem~\ref{linearStruc}.

\begin{corollary} \label{Chowla subspace}
Let \( K \subsetneq L \) be a field extension, and let \( A \) and \( B \) be two finite-dimensional \( K \)-subspaces of \( L \), with \( \dim A = \dim B > 0 \). Assume that \( B \) is a Chowla subspace. Then \( A \) is matched to \( B \).
\end{corollary}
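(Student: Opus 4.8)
The plan is to argue by contradiction using Theorem~\ref{linearStruc}. Suppose $A$ is not matched to $B$; then there is a nonzero subspace $R \subseteq B$ and decompositions $A = S \oplus Y$, $B = R \oplus Z$ with $S = \sum_{a \in S} aK(x)$ for every $x \in R$ and $\dim Y < \dim R$. In particular $S$ is nonzero, so pick a nonzero $a_0 \in S$ and a nonzero $x \in R$. The key point is that $S = \sum_{a \in S} aK(x)$ forces $a_0 K(x) \subseteq S$, hence $a_0 K(x) \subseteq A \subseteq L$, so $K(x)$ is a finite extension of $K$ with $[K(x):K] = \dim_K a_0 K(x) \le \dim_K S \le n = \dim B$.

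But $x \in R \subseteq B$ and $x \ne 0$, so the Chowla hypothesis on $B$ gives $[K(x):K] \ge \dim B + 1 > n$, contradicting $[K(x):K] \le n$. Hence no such decomposition exists, and Theorem~\ref{linearStruc} tells us $A$ is matched to $B$.

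I do not expect any real obstacle here: the argument is a direct mirror of the proof of Corollary~\ref{Chowla} in the group setting, with $[K(x):K]$ playing the role of $|\langle R \rangle|$ and $\dim$ replacing cardinality. The only small point to be careful about is ensuring that the displayed decomposition actually produces a nonzero $R$ (so that $B \setminus \{0\}$ contains an element $x$ to which the Chowla condition applies) — but this is guaranteed by $\dim Y < \dim R$, which forces $\dim R \ge 1$. One should also note that $\dim_K a_0 K(x) = [K(x):K]$ because multiplication by the nonzero element $a_0$ in the field $L$ is a $K$-linear isomorphism $K(x) \to a_0 K(x)$.

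Alternatively, one could deduce this directly from Theorem~\ref{main linear} rather than from Theorem~\ref{linearStruc}: since $B$ Chowla implies $1 \notin B$, and any nonzero $R \subseteq B \oplus K$ with $\langle SR \rangle = S$ and $\dim S \le n$ would, by Proposition~\ref{proposition linear}, contain an algebraic element $x$ with $[K(x):K] \le n$; writing $R = \pi_B(R) \oplus (R \cap K)$ one checks a suitable $x$ can be taken in $B$, again contradicting the Chowla bound unless $S = 0$, so the matching criterion of Theorem~\ref{main linear} holds vacuously. But routing through Theorem~\ref{linearStruc} is cleaner, so that is the write-up I would give.
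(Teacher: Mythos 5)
Your argument is correct and is essentially the paper's own proof: both observe that any decomposition from Theorem~\ref{linearStruc} would force some translate $aK(x)$ with $0 \neq a \in S$ and $0 \neq x \in R \subseteq B$ to lie inside $A$, which is impossible since the Chowla hypothesis gives $\dim aK(x) = [K(x):K] > \dim B = \dim A$. The paper states this more tersely, but the content is identical.
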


\begin{proof}
For any \( a \in A\setminus \{ 0 \} \), any nonzero $K$-subspace \( R \subseteq B \), and any \( x \in R\setminus \{ 0 \} \), we have 
\[ 
\dim aK(R) \geq \dim aK(x) = [K(x):K] > \dim B = \dim A,
\]
by the Chowla assumption. Thus a decomposition of \( A \) as in Theorem~\ref{linearStruc} is not possible. We conclude that \( A \) is matched to \( B \).
\end{proof}

\begin{example}
Take \( K=\mathbb{Q} \) and \( L=\mathbb{Q}(\sqrt[p]{2}) \), where \( p \) is a prime.  Let $A$ be any $\mathbb{Q}$-subspace of \( L \) of dimension \( p-1 \), and let \( B \) be the trace-zero $\mathbb{Q}$-subspace of \( \mathbb{Q}(\sqrt[p]{2}) \), which has dimension $p-1$. 

We claim that \( B \) is a Chowla subspace. To see this, observe that the trace in this extension of any \( a \in \mathbb{Q} \) is \( pa \) (see, for example, Chapter 1 of \cite{McCarthy}).  Hence if \( x \in B \setminus \{ 0 \} \), then we must have \( x \in \mathbb{Q}(\sqrt[p]{2}) \setminus \mathbb{Q} \). It follows that for any \( x \in B\setminus \{ 0 \} \), the degree \( [\mathbb{Q}(x):\mathbb{Q}] \) cannot equal \( 1 \) and must divide \( [\mathbb{Q}(\sqrt[p]{2}):\mathbb{Q}] = p \).  Therefore \( [\mathbb{Q}(x):\mathbb{Q}] = p \), which is greater than \( \dim B \).  The claim is established.  

We conclude that, by Corollary~\ref{Chowla subspace}, $A$ is matched to $B$.
\end{example}

The following result is new.  It can be viewed as a linear analogue of Corollary~\ref{Generalize Symmetric}.

\begin{corollary}  \label{AnalogueGenSymmetric}
Let \( K \subsetneq L \) be a field extension, and let \( A \) and \( B \) be two \( n \)-dimensional \( K \)-subspaces of \( L \), with \( n > 0 \) and \( 1 \notin A \). Assume that for every nonzero $K$-subspace \( R \) of \( B \), we have 
\[
\dim (K(R) \cap A) \geq \dim R.
\]
Then the pair \( (A, B) \) is matchable.
\end{corollary}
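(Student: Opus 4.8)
The plan is to mirror the proof of Corollary~\ref{Generalize Symmetric}, replacing "cosets of $\langle R\rangle$" by "$K(R)$-translates" and invoking Lemma~\ref{Invariant} in place of Proposition~\ref{UnionOfCosets}. I would argue by contraposition: assume $(A,B)$ is unmatchable and deduce that there is a nonzero $K$-subspace $R$ of $B$ with $\dim(K(R)\cap A)<\dim R$, contradicting the hypothesis.

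First I would apply Theorem~\ref{linearStruc} to the unmatchable pair $(A,B)$. Since $1\notin A$, and $A$ is one of the subspaces in the decomposition, I should first rule out the trivial case $1\in B$: if $1\in B$, the theorem's decomposition uses $R=K$, but then $S=A$ is a $K(R)=K$-vector space that by Proposition~\ref{proposition linear} (or directly) would need $1\cdot K=K\subseteq S\subseteq A$, contradicting $1\notin A$. Actually it is cleaner to note that the hypothesis with $R=K\cap B$... but $K\cap B$ could be $\{0\}$; the simplest route is: if $1\in B$ then take $R=\langle 1\rangle=K\subseteq B$, so $K(R)=K$ and $\dim(K(R)\cap A)=\dim(K\cap A)=0<1=\dim R$, contradicting the hypothesis. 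So we may assume $1\notin B$, and then Theorem~\ref{linearStruc} furnishes a nonzero $R\subseteq B$ with $A=S\oplus Y$, $B=R\oplus Z$, where $aK(x)\subseteq S$ for all $a\in S$, $x\in R$, and $\dim Y<\dim R$.

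Next I would invoke Lemma~\ref{Invariant}: since $S$ is nonzero and $aK(x)\subseteq S$ for all $a\in S$, $x\in R$, we get $aK(R)\subseteq S$ for all $a\in S$, i.e., $S$ is a $K(R)$-subspace of $L$. The key claim is then $K(R)\cap S=\{0\}$: indeed, if $0\neq s\in K(R)\cap S$, then $s^{-1}\in K(R)$ (as $K(R)$ is a field), so $1=s\cdot s^{-1}\in sK(R)\subseteq S\subseteq A$, contradicting $1\notin A$. Hence $K(R)\cap A=K(R)\cap(S\oplus Y)$; since $S\subseteq A$ and $K(R)\cap S=\{0\}$, a short dimension count gives $\dim(K(R)\cap A)=\dim\bigl(K(R)\cap(S\oplus Y)\bigr)\le\dim Y<\dim R$. (The inequality $\dim(K(R)\cap(S\oplus Y))\le\dim Y$ follows because the projection $A=S\oplus Y\to Y$ along $S$ is injective on $K(R)\cap A$, as its kernel meets $K(R)\cap A$ in $K(R)\cap S=\{0\}$.) This contradicts the assumed inequality $\dim(K(R)\cap A)\ge\dim R$, completing the contrapositive.

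The only delicate point — and the step I'd expect to require the most care — is the dimension bookkeeping in the last paragraph: one must be sure that $K(R)\cap A$ injects into $Y$ under projection along $S$, which hinges precisely on $K(R)\cap S=\{0\}$, and that in turn is exactly where $1\notin A$ enters (via $S$ being closed under multiplication by $K(R)$). Everything else is a direct transcription of the group-theoretic argument, with Lemma~\ref{Invariant} supplying the analogue of "$S$ is a union of $\langle R\rangle$-cosets."
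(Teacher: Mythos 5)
Your proof is correct and follows essentially the same route as the paper's: apply Theorem~\ref{linearStruc}, upgrade to $aK(R)\subseteq S$ via Lemma~\ref{Invariant}, use $1\notin A$ to get $K(R)\cap S=\{0\}$, and then project $K(R)\cap A$ into $Y$ along $S$ to obtain $\dim(K(R)\cap A)\le\dim Y<\dim R$. The only cosmetic difference is that you treat the case $1\in B$ separately, whereas the paper's argument handles it uniformly (there the decomposition has $R=K$, $S=A$, $Y=\{0\}$, and the same projection argument applies).
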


\begin{proof}
Suppose that \( (A, B) \) is unmatchable. Then, by Theorem~\ref{linearStruc}, there exists a decomposition
\[
A = S \oplus Y, 
\qquad 
B = R \oplus Z,
\]
where \( R \) is a nonzero $K$-subspace, \( S \) is stable under multiplication by elements of \( K(R)\), and \( \dim Y < \dim R \).  Observe that  \( K(R) \cap S = \{0\}\); otherwise, we can choose a nonzero \( a \in K(R) \cap S \), and then \( 1 = aa^{-1} \in aK(R) \subseteq S \subseteq A \), contradicting \( 1 \notin A \). Consider now the projection map \( \pi : A \to Y \) along \(S\), i.e., the $K$-linear map satisfying 
\( \ker \pi = S \) and  \( \pi|_{Y} = \mathrm{id}_{Y}\). Restrict \(\pi\) to the subspace \( T = K(R) \cap A \). Then
\[
\ker(\pi|_{T})
    = T \cap \ker \pi
    = (K(R) \cap A) \cap S
    = K(R) \cap S
    = \{0\}.
\]
Thus \(\pi|_{T}\) is injective, and we obtain
\[
\dim (K(R) \cap A) = \dim T = \dim \pi(T) \leq \dim Y < \dim R.
\] 
\end{proof}

\begin{remark}
Taking \( B = A \) in Corollary~\ref{AnalogueGenSymmetric}, with \( 1 \notin A \), we find that \( A \) must be matched to itself.  On the other hand, it follows immediately from the definition of matchable pair that \( 1 \notin A \) is a necessary condition for \( (A,A) \) to be matchable.  We thus obtain the commutative case of Theorem 2.8 in \cite{Eliahou 2}.  
\end{remark}

\begin{definition}
Let \( K \subsetneq L \) be a field extension with at least one nontrivial intermediate field of finite $K$-dimension.  We define \( n_0(K, L) \) to be the smallest degree of such an extension, i.e.,
\[
n_0(K, L) = \min_F \, [F : K],
\]
where \( F \) ranges over all intermediate fields \( K \subsetneq F \subseteq L \) with \( [F : K] < \infty \).
\end{definition}

In Theorem~\ref{BoundaryLin} below, we provide a necessary and sufficient condition for the existence of a nontrivial unmatchable pair \( (A,B) \) in all situations where \( n_0(K,L) \) is defined and less than \( [L:K] \). This result is a linear analogue of Theorem~\ref{BoundaryGroup}.

\begin{theorem}\label{BoundaryLin}
Let $K \subsetneq L$ be a field extension with at least one proper nontrivial intermediate field of finite $K$-dimension. Let $n$ be a positive integer such that 
\[
n_0(K,L) \leq n < [L:K].
\]
Then there exist $n$-dimensional $K$-subspaces $A,B \subseteq L$ with $1 \notin B$ such that $(A,B)$ is an unmatchable pair if and only if there 
exists an intermediate extension $K \subsetneq F \subseteq L$ with 
$[F:K] \leq n$ and $[F:K] \nmid (n+1)$.
\end{theorem}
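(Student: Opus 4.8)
The plan is to imitate, essentially verbatim, the proof of Theorem~\ref{BoundaryGroup}, substituting Theorem~\ref{linearStruc} for the group structure theorem and using Lemma~\ref{Invariant} in the role played there by Lagrange's theorem: it records that if $S$ is the ``periodic'' part of a linear nearly periodic decomposition, then $[K(R):K]$ is finite and $\dim_K S$ is a positive multiple of it. The intermediate field witnessing the equivalence will always be $F=K(R)$.

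For the forward direction, I would start from an unmatchable pair $(A,B)$ with $1\notin B$ and $\dim A=\dim B=n$, invoke Theorem~\ref{linearStruc} to obtain a nonzero $R\subseteq B$ together with direct-sum decompositions $A=S\oplus Y$ and $B=R\oplus Z$ with $S=\sum_{a\in S}aK(x)$ for every $x\in R$ and $\dim Y<\dim R$, and set $F=K(R)$. Since $B$ is a $K$-subspace with $1\notin B$, we have $B\cap K=\{0\}$, hence $R\cap K=\{0\}$; this makes $F$ a proper, nontrivial intermediate field of finite $K$-dimension and also forces $\dim R\le [F:K]-1$. Lemma~\ref{Invariant} then gives $m:=[F:K]<\infty$ and $\dim_K S=mq$ with $q\ge 1$, so $m\le\dim_K S\le n$, and the bound $\dim Y<\dim R\le m-1$ gives $\dim Y\le m-2$. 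Thus $mq\le n=mq+\dim Y\le mq+(m-2)$, whence $mq<n+1<m(q+1)$, i.e. $m\nmid(n+1)$ --- the same sandwich as in Theorem~\ref{BoundaryGroup}.

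For the converse, given an intermediate extension $K\subsetneq F\subseteq L$ with $m:=[F:K]\le n$ and $m\nmid(n+1)$ (note $m\ge n_0(K,L)\ge 2$), I would write $n=mq+r$ with $q\ge 1$ and $0\le r\le m-2$, the bound $r\le m-2$ being exactly what $m\nmid(n+1)$ provides. Then I would build the four subspaces inside $L$: pick $a_1,\dots,a_q\in L$ linearly independent over $F$ (possible since $n<[L:K]=[L:F]\cdot m$ forces $[L:F]>q$) and set $S=\bigoplus_{i=1}^{q}a_i F$, so $\dim_K S=mq$ and, as $K(x)\subseteq F$ for every $x\in F$, $S=\sum_{a\in S}aK(x)$ for all such $x$; choose a $K$-hyperplane $R$ of $F$ with $1\notin R$, so $\dim R=m-1$ and $R\cap K=\{0\}$; choose a $K$-subspace $Y\subseteq L$ with $Y\cap S=\{0\}$ and $\dim Y=r$; and choose a $K$-subspace $Z\subseteq L$ with $Z\cap(R\oplus K)=\{0\}$ and $\dim Z=n-m+1$. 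Setting $A=S\oplus Y$ and $B=R\oplus Z$, one checks $\dim A=\dim B=n$, $\dim Y=r<m-1=\dim R$, and $B\cap K=\{0\}$ (hence $1\notin B$), so Theorem~\ref{linearStruc} declares $(A,B)$ unmatchable.

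The arithmetic here is routine and word-for-word the group case; the step to watch is the purely linear-algebraic one of producing $S$, $Y$, $Z$ inside $L$ with the prescribed dimensions and intersection conditions. This is where the hypothesis $n<[L:K]$ is used repeatedly: to find $q$ elements of $L$ independent over $F$, to find a complement $Y$ of $S$ of dimension $r$, and --- most delicately --- to choose $Z$ as a complement of $R\oplus K$ rather than merely of $R$, which is precisely what forces $R\oplus Z$ to meet $K$ trivially and hence to avoid $1$. Once these constructions are secured, Lemma~\ref{Invariant} and Theorem~\ref{linearStruc} supply everything else, and as in the remark following Corollary~\ref{UnmatchLarge} one recovers that corollary by taking any $F$ with $[F:K]=n_0(K,L)$ when $n\not\equiv n_0(K,L)-1\pmod{n_0(K,L)}$.
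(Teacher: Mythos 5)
Your proposal is correct and follows essentially the same route as the paper's own proof: the forward direction sets $F=K(R)$ and uses Lemma~\ref{Invariant} to get $\dim_K S = mq$ and the sandwich $mq < n+1 < m(q+1)$, and the converse builds $S=\bigoplus_i a_iF$, a complement $R$ of $K$ in $F$, and subspaces $Y$, $Z$ of the same dimensions and with the same intersection conditions as in the paper before invoking Theorem~\ref{linearStruc}. Your explicit remarks on where $n<[L:K]$ is needed to fit $Y$ and $Z$ inside $L$ are a welcome (if minor) addition to what the paper leaves implicit.
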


\begin{proof}
Assume there exist $n$-dimensional $K$-subspaces $A,B \subseteq L$ such that $1 \notin B$ and \( (A,B) \) is unmatchable. Applying Theorem~\ref{linearStruc}, we obtain subspaces $R,S,Y,Z$ as described therein, with \( S \) a nonzero direct sum of $K$-subspaces of the form \( aK(R) \) (where \( a \in S \)).  Thus \( [K(R):K] \leq \dim S \leq n \). 

Let \( F = K(R) \) and \( m = [F:K] \). We now show that $F$ can serve as the desired intermediate field of $K \subsetneq L$.  The $K$-dimension of \( S \) must be a positive multiple of \( m \), on account of the direct sum structure described above.   Write \( \dim S = mq \) where \( q \geq 1 \). Then 
\[
n = \dim A = \dim S + \dim Y = mq + \dim Y.
\]
Clearly \( m \leq mq \leq n \). Observe also that since $\dim Y < \dim R$, 
$R \subseteq K(R) = F$, and $R \cap K = \{0\}$, we have $\dim Y \leq m-2$. This gives us 
\[
mq \leq n \leq mq + m-2.
\]
Hence
\[
mq < mq + 1 \leq n+1 \leq mq + m-1 < m(q+1),
\]
so that $m \nmid (n+1)$, as desired.

Conversely, assume that there exists an intermediate extension $K \subsetneq F \subseteq L$ with $[F:K] \leq n$ and $[F:K] \nmid (n+1)$. Let $m = [F:K]\geq 2$ and observe that we can write 
\[
n = mq + r,
\]
where $q \geq 1$ and $0 \leq r \leq m-2$. 

Next we define \( R \) and \( S \). Choose a $K$-subspace $R \subseteq F$ with $\dim R = m-1$ and $R \cap K = \{0\}$. 
Note that \( [L:F] > n/m \geq q \), and let $a_1, \dots, a_q \in L$ be linearly independent over \( F \). Define  
\[
S = \sum_{i=1}^q a_i F.
\] 
This sum is direct. Let $Y$ be a $K$-subspace of $L$ with $Y \cap S = \{0\}$ and $\dim Y = r$. 
Also, let $Z$ be a $K$-subspace of $L$ with $Z \cap (R+K) = \{0\}$ and \( \dim Z = n-m+1 \). 

Finally, define 
\[
A = S + Y, \qquad B = R + Z.
\]
Both of these sums are direct, and \( 1 \notin B \). Moreover, the conditions of Theorem~\ref{linearStruc} are all satisfied, 
so that the pair $(A,B)$ is unmatchable.
\end{proof}

\begin{example}
Using Theorem~\ref{BoundaryLin}, we can show that in the field extension \(\mathbb{Q} \subseteq \mathbb{Q}(\sqrt[4]{2})\), every pair \((A,B)\) of $3$-dimensional $\mathbb{Q}$-subspaces of \(\mathbb{Q}(\sqrt[4]{2})\) with \(1 \notin B\) is matchable. Note that \( [\mathbb{Q}(\sqrt[4]{2}): \mathbb{Q}] = 4\) and this extension has exactly one nontrivial proper intermediate field, namely \(\mathbb{Q}(\sqrt{2})\).  Hence \( n_0(\mathbb{Q},\mathbb{Q}(\sqrt[4]{2}))\) is defined and equals \( [\mathbb{Q}(\sqrt{2}): \mathbb{Q}] = 2 \).  Taking \( n = 3 \) in Theorem~\ref{BoundaryLin} and noting that \( 2 \mid (n+1) \), we conclude that every pair of subspaces \( (A,B) \) as above is matchable.
\end{example}

The corollary below addresses the special case of finite fields.

\begin{corollary}
Consider the field extension \(\mathbb{F}_q \subsetneq \mathbb{F}_{q^{m}}\), where \(q\) is a prime power and \( m \) is a composite number. Let \( t \) be the smallest prime divisor of \( m \), and let \( n \) be an integer with \(t \le n < m\). Then there exist unmatchable subspaces \(A,B \subseteq \mathbb{F}_{q^{m}}\) over $\mathbb{F}_q$ with \(\dim A=\dim B=n\) and \(1\notin B\) if and only if there exists a positive divisor \(d\) of \(m\) such that \( d \leq n \) and \(d\nmid (n+1)\).
\end{corollary}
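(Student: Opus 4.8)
The plan is to deduce this corollary directly from Theorem~\ref{BoundaryLin}, applied to the extension $\mathbb{F}_q \subsetneq \mathbb{F}_{q^m}$, by translating the abstract condition ``there exists an intermediate field $K \subsetneq F \subseteq L$ with $[F:K] \le n$ and $[F:K] \nmid (n+1)$'' into the arithmetic condition ``there exists a positive divisor $d$ of $m$ with $d \le n$ and $d \nmid (n+1)$.'' First I would check that the hypotheses of Theorem~\ref{BoundaryLin} are met: since $m$ is composite with smallest prime divisor $t$, the subfield $\mathbb{F}_{q^t}$ is a proper, nontrivial intermediate field of finite $\mathbb{F}_q$-dimension, so $n_0(\mathbb{F}_q, \mathbb{F}_{q^m})$ is defined; moreover $n_0(\mathbb{F}_q, \mathbb{F}_{q^m}) = t$, because the intermediate fields of $\mathbb{F}_q \subseteq \mathbb{F}_{q^m}$ are exactly the $\mathbb{F}_{q^d}$ for $d \mid m$, and the smallest degree $d > 1$ dividing $m$ is $t$. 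Hence the hypothesis $t \le n < m$ is precisely $n_0(\mathbb{F}_q,\mathbb{F}_{q^m}) \le n < [\mathbb{F}_{q^m}:\mathbb{F}_q]$, as required.

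The core step is the bijection between intermediate fields and divisors of $m$: by the standard subfield structure of finite fields, $K \subsetneq F \subseteq \mathbb{F}_{q^m}$ with $[F:K]$ finite forces $F = \mathbb{F}_{q^d}$ for some divisor $d$ of $m$ with $d > 1$, and then $[F:\mathbb{F}_q] = d$. Conversely every divisor $d$ of $m$ gives such a field. Under this correspondence the two conditions $[F:K] \le n$ and $[F:K] \nmid (n+1)$ become $d \le n$ and $d \nmid (n+1)$. One small point to address: the corollary phrases its arithmetic condition in terms of a ``positive divisor $d$ of $m$'' without explicitly excluding $d = 1$, but $d = 1$ never satisfies $d \nmid (n+1)$ (as $1$ divides everything), so the $d = 1$ case is vacuous and the two formulations agree. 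With this translation in hand, Theorem~\ref{BoundaryLin} gives the ``if and only if'' immediately.

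I expect no serious obstacle here; the only thing requiring care is invoking the correct classification of subfields of $\mathbb{F}_{q^m}$ (namely $\mathbb{F}_{q^d} \subseteq \mathbb{F}_{q^m} \iff d \mid m$) and noting that an intermediate field containing $\mathbb{F}_q$ has $\mathbb{F}_q$-degree equal to its index, so that ``smallest degree of a nontrivial finite intermediate extension'' coincides with ``smallest prime divisor of $m$.'' Everything else is a direct substitution into the hypothesis and conclusion of Theorem~\ref{BoundaryLin}.
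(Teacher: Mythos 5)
Your proposal is correct and follows essentially the same route as the paper: both deduce the statement directly from Theorem~\ref{BoundaryLin} using the classification of intermediate fields of $\mathbb{F}_q \subseteq \mathbb{F}_{q^m}$ as the $\mathbb{F}_{q^d}$ with $d \mid m$ and $[\mathbb{F}_{q^d}:\mathbb{F}_q]=d$. Your additional remarks (that $n_0(\mathbb{F}_q,\mathbb{F}_{q^m})=t$ and that the $d=1$ case is vacuous) are correct verifications that the paper leaves implicit.
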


\begin{proof}
This follows immediately from Theorem~\ref{BoundaryLin}, since the intermediate fields of the extension \(\mathbb{F}_q \subseteq \mathbb{F}_{q^{m}}\) are precisely the fields of the form \( \mathbb{F}_{q^{d}} \), where \( d \) is a positive divisor of \( m \), and \( [\mathbb{F}_{q^d}:\mathbb{F}_q] = d \).
\end{proof}

\section{Future directions}\label{future}

\begin{enumerate}
    \item We believe that a non-abelian version of Theorem~\ref{Structure groups} is plausible. The natural question is: what replaces the nearly periodic decomposition in this setting, and can obstructions be described in terms of coset-like structures or subgroup actions?
    
    \item A natural next step would be to investigate pairs \( (A, B) \) which are partially matchable, that is, matchable up to a prescribed defect. Some useful guidance may be provided by the ``defect versions" of Hall’s marriage theorem and Rado’s theorem (see \cite{Mirsky}).

    \item From an algorithmic perspective, given subsets (or subspaces) $A$ and $B$, can one efficiently detect whether they admit a nearly periodic decomposition (or subfield-translate decomposition in the linear case)? What is the computational complexity of deciding unmatchability in these contexts? 
\end{enumerate}

\section*{Acknowledgment}
The authors are grateful to the anonymous referee, who provided many helpful suggestions for improving the paper.

\bigskip

\noindent \textbf{Declarations} 

\medskip

\noindent \textbf{\small Conflict of interest:} {\small The authors declare that they have no conflict of interest.}\\
\textbf{\small Data availability statement:} {\small Data sharing is not applicable to this article as no datasets were generated or analyzed during the current study.}

\end{document}